\numberwithin{equation}{section}
\theoremstyle{plain}
\newtheorem{theorem}{Theorem}[section]
\newtheorem{corollary}[theorem]{Corollary}
\newtheorem{lemma}[theorem]{Lemma}
\newtheorem{proposition}[theorem]{Proposition}
\theoremstyle{definition}
\newtheorem{definition}{Definition}[section]
\theoremstyle{remark}
\numberwithin{equation}{section}
\numberwithin{table}{section}
\numberwithin{figure}{section}
\DeclareMathOperator{\lcm}{lcm}
\def\N{{\Bbb N}}
\def\1{{\bf 1}}
\def\lcm{\operatorname{lcm}}
\newcommand{\bea}{\begin{eqnarray*}}
\newcommand{\eea}{\end{eqnarray*}}
\newcommand{\bean}{\begin{eqnarray}}
\newcommand{\eean}{\end{eqnarray}}
\def\Gf#1,#2,#3,#4;{G^{#1,#2}_{#3,#4}}
\def\Hf#1,#2,#3,#4;{H^{#1,#2}_{#3,#4}}
\def\gfi#1,#2,#3,#4;{I^{(#1_i),(#2_i)}_{(#3_i),#4}}
\def\gfd#1,#2,#3,#4;{D^{(#1_i),(#2_i)}_{(#3_i),#4}}
\def\phi{\varphi}
\begin{document}
\title[ Arithmetical properties at the level of idempotence ]
{Arithmetical properties at the level of idempotence }
\author{Fethi Bouzeffour $^a$, Wissem Jedidi $^b$}
\address{$^a$ Department of mathematics, College of Sciences, King Saud University,  P. O Box 2455 Riyadh 11451, Saudi Arabia.} \email{fbouzaffour@ksu.edu.sa, wissem$_-$jedidi@yahoo.fr}
\address{$^b$ Department of Statistics \& OR, King Saud University, P.O. Box 2455, Riyadh 11451, Saudi Arabia and Universit\'e de Tunis El Manar, Facult\'e des Sciences de Tunis, LR11ES11 Laboratoire d'Analyse Math\'ematiques et Applications, 2092, Tunis, Tunisia}

\subjclass[2000]{11A25, 16U99.}%
\keywords{Arithmetic functions, idempotent.}%

%\date{June, 29 2010}
%\dedicatory{This work }%
%\commby{}
% -------------
%\date{this is}
\begin{abstract}In this paper we give an attempt to extend some arithmetic properties such as multiplicativity, convolution products to the setting of operators theory. We provide a significant examples which are of interest in number theory.  We also give a representation of the Euler differential operator by means of the Euler totient arithmetic function and idempotent elements of some associative unital algebra.
\end{abstract}
\keywords{ Arithmetic functions, convolution products, idempotent.} \subjclass[2010]{Primary 16U99; Secondary
11A05}
\maketitle
\section{Introduction}In number theory, an arithmetical, or number-theoretic function is a function $\alpha: \,\mathbb{N}\rightarrow \mathbb{C}$. Their various properties were investigated by several authors and they represent an important research topic up to now. An important property shared by many number-theoretic  functions is multiplicativity: an arithmetical function $\alpha$ is said to be multiplicative, if for all relatively prime positive integers $n,\,m$, we have $$\alpha(nm)=\alpha(n)\alpha(m).$$ \
Examples of important arithmetical functions include: the Euler totient function,  denoted $\varphi$, and  defined as the number of positive integers less than and relatively prime to $n$.  The M\"{o}bius function given by
$$ \mu(n) = \begin{cases}(-1)^{\omega(n)} & \mbox{ if } n \mbox{ is a square-free integer}\\ 0&\mbox{ otherwise },\end{cases} $$
where $\omega(n)$ is the number of prime factors of $n$ is also a multiplicative functions.  Another important multiplicative function is the Ramanujan sum's, which is defined by \cite{Ram1918}
\begin{equation*}  \label{Ramanujan_sum}
c_n(j):= \sum_{\substack{\gcd(k,n)=1\\1\leq k\leq n}} \varepsilon_n^{jk},
\end{equation*}
where $\varepsilon_n$ denotes a primitive $n$-th root of unity and  $\gcd(k,n)$ to denote the greatest common divisor of the positive integers $k$ and $n.$  These sums fit naturally with other number-theoretic  functions. For instance, one has
\begin{equation}
c_n(1)=\mu(n),\quad c_n(n)=\varphi(n),
\end{equation}
For a more elaborate account on the multiplicative functions, we refer the reader to the texts \cite{Apo,Coh1959} and the survey articles \cite{NivZucMont1991}.  \\

In this work we suggest to extend the rang of the number-theoretic function and consider functions $f$ such that:

\noindent - their domain are the positive integers and whose range is a subset of an unital associative algebra $A$ over $\mathbb{C}$;

\noindent - they satisfy the property
\begin{equation}
f(nm)=f(n)f(m), \,\, \text {when}\,\, \gcd(n,m)=1.\label{l1}
\end{equation}

Our first objective  is to give a variety of significant examples, which are of interest in number theory. Notice that if  $\mathcal{A}$ is an unital algebra then every number-theoretic function $\alpha: \mathbb{N} \rightarrow \mathbb{C}$ can be identified with the function  $n\rightarrow\alpha(n) \, e$ ($e$ is the unit of $\mathcal{A}$). \\

Recall that an element $P \in \mathcal{A}$ such that $P^2 = P$ is called idempotent. A set of idempotent elements $P_1,\dots,P_n$ is called orthogonal if, $$P_i \,P_j=\delta_{ij} P_i \quad \mbox{and}\quad \sum_{i=1}^n P_i=e.$$

In this paper, we consider a set of orthogonal idempotent $P_j(n)$, indexed by two integers  $n\geq 1$ and $j\geq 0$, satisfying:

\noindent - for every fixed integer $n,$ the sequence $j\rightarrow P_j(n)$ is periodic with period $n$;

\noindent -  for every arithmetic progression $j+n,\dots j+rn$, we have
$$P_{j}(n)=\sum_{k=1}^{r}P_{j+kn}(nr).$$
Under these conditions and for every fixed $j\geq 0$, the function $n\rightarrow P_j(n)$ is multiplicative.\\
Another example considered in this note, consists to replace the root of unity in the Ramanujan sum's by an element $s$ of $\mathcal{A}$ satisfying $s^n=e$. Then, the following  sum  \begin{align}
\sum_{\substack{\gcd(k,n)=1\\1\leq k\leq n}}s^k
\end{align}is a multiplicative function.
%%%%%%%%%%%%%%%%%%%%%%%%%%%%%%%%%%%%%%%%%%%%%%%%%%%%%%%%%%%%%%%%
\section{Arithmetic properties of Idempotents}
%%%%%%%%%%%%%%%%%%%%%%%%%%%%%%%%%%%%%%%%%%%%%%%%%%%%%%%%%%%%%%%%
\subsection{Multiplicative functions}
%%%%%%%%%%%%%%%%%%%%%%%%%%%%%%%%%%%%%%%%%%%%%%%%%%%%%%%%%%%%%%%%
Throughout, $\mathcal{A}$ will be an associative unital algebra over $\mathbb{C}$. We recall the following definitions and basic facts
\begin{definition} A function $f :\mathbb{N}\rightarrow \mathcal{A}$ is called multiplicative if
 $$f(nm)=f(n)f(m),\,\,  \text {when}\,\, \gcd(n,m)=1.$$
\end{definition}
%%%%%%%%%%%%%%%%%%%%%%%%%%%%%%%%%%%%%%%%%%%%%%%%%%%%%%%%%%%%%%%%
\begin{lemma} If $f :\mathbb{N}\rightarrow \mathcal{A}$ is multiplicative, then $f(1)$ is idempotent. \end{lemma}
\begin{proof} Taking $n=m= 1$, we have
$f(1) = f(1 . 1) = f(1) · f(1).$
So, $f(1)$ is idempotent.
\end{proof}
%%%%%%%%%%%%%%%%%%%%%%%%%%%%%%%%%%%%%%%%%%%%%%%%%%%%%%%%%%%%%%%%
The following proposition is a characterization of multiplicative functions.
\begin{proposition}For an arithmetical function $f :\mathbb{N}\rightarrow \mathcal{A}$, the following are equivalent:

1. $f$ is multiplicative,

2. for all $n=p_1^{\alpha_1}\dots p_k^{\alpha_k}$ (the standard form factorization of $n$), we have
\begin{equation*}
f(n)=f(p_1^{\alpha_1})\dots f(p_k^{\alpha_k}).
\end{equation*}
\end{proposition}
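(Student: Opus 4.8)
The plan is to prove the two implications separately, with the bulk of the work residing in the direction $1 \Rightarrow 2$. For that direction I would argue by induction on the number $k$ of distinct prime factors of $n$. The base case $k=1$ is the trivial statement $f(p_1^{\alpha_1}) = f(p_1^{\alpha_1})$. For the inductive step, write $n = p_1^{\alpha_1}\cdots p_k^{\alpha_k}$ and set $m = p_1^{\alpha_1}\cdots p_{k-1}^{\alpha_{k-1}}$, so that $n = m \cdot p_k^{\alpha_k}$. The two factors $m$ and $p_k^{\alpha_k}$ are relatively prime because $p_k$ divides neither any $p_i^{\alpha_i}$ with $i < k$ nor their product. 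Hence multiplicativity gives $f(n) = f(m) f(p_k^{\alpha_k})$, and applying the induction hypothesis to $f(m)$ yields $f(n) = f(p_1^{\alpha_1})\cdots f(p_{k-1}^{\alpha_{k-1}}) f(p_k^{\alpha_k})$, which is exactly the claimed factorization.

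For the converse $2 \Rightarrow 1$, suppose $f$ factors over prime powers as stated, and let $n, m$ be relatively prime. Write their standard factorizations $n = p_1^{\alpha_1}\cdots p_k^{\alpha_k}$ and $m = q_1^{\beta_1}\cdots q_l^{\beta_l}$. Since $\gcd(n,m) = 1$, the prime sets $\{p_i\}$ and $\{q_j\}$ are disjoint, so the concatenated list $p_1^{\alpha_1},\dots,p_k^{\alpha_k}, q_1^{\beta_1},\dots,q_l^{\beta_l}$ is precisely the standard form factorization of the product $nm$. Applying hypothesis $2$ to $nm$, then to $n$, then to $m$, we get
\begin{equation*}
f(nm) = f(p_1^{\alpha_1})\cdots f(p_k^{\alpha_k}) f(q_1^{\beta_1})\cdots f(q_l^{\beta_l}) = f(n) f(m),
\end{equation*}
which is multiplicativity. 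One should also dispose of the degenerate cases where $n=1$ or $m=1$ (empty product of prime powers), interpreting the empty product as $f(1)$; here the identity $f(1) f(m) = f(m)$ requires knowing $f(1)$ acts as a left identity on the image, which follows from $f(m) = f(1\cdot m) = f(1) f(m)$ once multiplicativity is in hand, or can be folded into the convention that $n,m>1$ in the nontrivial case.

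The main subtlety, and the only place the noncommutativity of $\mathcal{A}$ matters, is bookkeeping the \emph{order} of the factors: because $\mathcal{A}$ need not be commutative, the factorization in statement $2$ must be read as an ordered product following a fixed ordering of the primes, and the argument above is careful to preserve that order throughout (in $1 \Rightarrow 2$ the new factor $f(p_k^{\alpha_k})$ is appended on the right; in $2 \Rightarrow 1$ the blocks for $n$ and $m$ appear in that order). Apart from this, the proof is a routine induction; I do not anticipate any genuine obstacle, and the statement is essentially the operator-valued analogue of the classical fact for $\mathbb{C}$-valued multiplicative functions.
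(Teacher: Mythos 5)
Your proof is correct; the paper in fact states this proposition without any proof, and your two-way induction (appending $f(p_k^{\alpha_k})$ on the right for $1\Rightarrow 2$, concatenating the ordered prime-power blocks for $2\Rightarrow 1$) is exactly the standard argument one would supply, with the right attention paid to the order of factors in a noncommutative $\mathcal{A}$. The one loose end you flag --- the case $n=1$ in the direction $2\Rightarrow 1$, where one needs $f(1)f(m)=f(m)$ --- is a genuine defect of the \emph{statement} rather than of your proof: the paper's own Lemma only yields that $f(1)$ is idempotent, not that $f(1)=e$, so condition 2 (which says nothing about $f(1)$) cannot by itself force $f(1\cdot m)=f(1)f(m)$; the equivalence is only literally true if one either assumes $f(1)=e$ (equivalently, reads the empty product in condition 2 as $e$) or restricts the multiplicativity requirement to $n,m>1$. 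Your handling of this point is appropriate, and no further repair of the argument is needed.
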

%%%%%%%%%%%%%%%%%%%%%%%%%%%%%%%%%%%%%%%%%%%%%%%%%%%%%%%%%%%%%%%%
We naturally  extend the convolution product such as Drichlet product, $\lcm $ product and unitary product (see, \cite{Apo,Vai1931,Siv1989}) as follows:
\begin{align}\label{conv}
&(f\, * \,g)(n)=\sum_{kl=n}f(k)g(l)\quad (\text{Dirichlet product}). \\&\label{conv1}
(f\, \Box\, g)(n)=\sum_{\lcm(k,l)=n}f(k)g(l)\quad (\text{lcm-product)}.
\\& \label{conv11}
(f\, \sqcup\, g)(n)=\sum_{kl=n\,\,\gcd(k,l)=1}f(k)g(l)\quad (\text{unitary product)}.
\end{align}
where $f,\,g: \mathbb{N}\rightarrow \mathcal{A}$.\\

The convolution products defined in \eqref{conv}, \eqref{conv1} and \eqref{conv11} are associative but not commutative in general. If, in addition, if for all integers $n,m$, and every prime numbers $p$ and $q,$  $f(p^n)$ commutes with $g(q^m)$,  then we have
$$f\, * \,g=g\, * \,f,\quad f\, \Box \,g=g\, \Box \,f,\quad f\,\sqcup \,g=g\, \sqcup \,f.$$

The following propositions are easy to prove.
%%%%%%%%%%%%%%%%%%%%%%%%%%%%%%%%%%%%%%%%%%%%%%%%%%%%%%%%%%%%%%%%
\begin{proposition} 1) If $f$ is an arithmetic function on $\mathcal{A}$, then
$$f\, * \,I=I\, * \,f=f,\quad f\, \Box \,I=I\, \Box \,f=f,\quad f\,\sqcup \,I=I\, \sqcup \,f=f,$$
where
\begin{equation*}
I(n)=\left\{
  \begin{array}{l l}
    e\quad \text{if}\quad n=1 \\
   0\quad \text{otherwise}.
  \end{array}
  \right.
  \end{equation*}

2) If $f$ is an arithmetical function such that $f(1)$ is invertible element in $\mathcal{A}$, then $f$ has a unique inverse with respect to the Dirichlet product.

3) If $f$ is a multiplicative function on $\mathcal{A}$ and $b\in \mathcal{A}$ invertible element , then the function $n\mapsto bf(n)b^{-1}$ is  multiplicative.

4) If $f$ and $g$ are multiplicative functions, then $f*g$ is also a multiplicative function.
\end{proposition}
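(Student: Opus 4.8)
The plan is to settle the four assertions one at a time, each by a short direct check. For (1), I would evaluate each convolution against $I$ and note that the factor $I(l)$ annihilates every summand except the one with $l=1$: thus $(f*I)(n)=\sum_{kl=n}f(k)I(l)=f(n)e=f(n)$, and $(I*f)(n)=f(n)$ is the mirror statement; the same one-term collapse occurs for the $\lcm$-product (since $\lcm(k,1)=k$ forces $k=n$) and for the unitary product (since $kl=n$, $\gcd(k,l)=1$, $l=1$ again force $k=n$), so $I$ is a two-sided unit for all three products. For (3) the computation is immediate: with $g(n)=bf(n)b^{-1}$, $\gcd(n,m)=1$ and $b^{-1}b=e$, one gets $g(nm)=bf(nm)b^{-1}=bf(n)f(m)b^{-1}=\bigl(bf(n)b^{-1}\bigr)\bigl(bf(m)b^{-1}\bigr)=g(n)g(m)$.

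For (2), I would build the Dirichlet inverse by the usual recursion, keeping track of the order of multiplication since $\mathcal{A}$ need not be commutative. Set $g(1)=f(1)^{-1}$ and, for $n>1$, $g(n)=-f(1)^{-1}\sum_{d\mid n,\ d>1}f(d)\,g(n/d)$; only values $g(n/d)$ with $n/d<n$ occur, so $g$ is well defined, and substituting back gives $f*g=I$, i.e. a right inverse. A symmetric recursion $h(1)=f(1)^{-1}$, $h(n)=-\bigl(\sum_{d\mid n,\ d>1}h(n/d)f(d)\bigr)f(1)^{-1}$ produces a left inverse $h*f=I$. Associativity of $*$ then forces $h=h*I=h*(f*g)=(h*f)*g=I*g=g$, so this common function is a genuine two-sided inverse; the same computation applied to any candidate inverse shows it must coincide with $g$, giving uniqueness.

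For (4), I would use the bijection $d\leftrightarrow(a,b)$ between divisors $d$ of $nm$ and pairs $a\mid n$, $b\mid m$, valid precisely because $\gcd(n,m)=1$; under it $d=ab$, $nm/d=(n/a)(m/b)$, and $\gcd(a,b)=\gcd(n/a,m/b)=1$. Multiplicativity of $f$ and $g$ gives $f(ab)=f(a)f(b)$ and $g((n/a)(m/b))=g(n/a)g(m/b)$, so $(f*g)(nm)=\sum_{a\mid n}\sum_{b\mid m}f(a)f(b)\,g(n/a)\,g(m/b)$, whereas $(f*g)(n)\,(f*g)(m)=\sum_{a\mid n}\sum_{b\mid m}f(a)\,g(n/a)\,f(b)\,g(m/b)$; these agree term by term as soon as $f(b)$ can be commuted past $g(n/a)$. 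Factoring $f(b)$ and $g(n/a)$ into prime-power values via the characterization of multiplicative functions above and invoking the commutation hypothesis on prime-power values of $f$ and $g$ supplies exactly this. I expect (4) to be the one real obstacle: this commutation genuinely must be assumed, since otherwise $(f*g)(pq)$ carries the product $f(p)g(q)$ while $(f*g)(q)(f*g)(p)$ carries $g(q)f(p)$, so $f*g$ need not be multiplicative in a noncommutative $\mathcal{A}$ — the proof must therefore invoke that hypothesis explicitly.
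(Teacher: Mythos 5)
Your four arguments are all correct, and they are the standard ones; the paper itself offers no proof here (it simply declares these propositions ``easy to prove''), so there is no competing argument to compare against. Parts (1) and (3) are the one-line verifications you give; your treatment of (2) is actually more careful than what a routine write-up would contain, since in a noncommutative unital algebra one really does need to build the right inverse $g$ and the left inverse $h$ by separate recursions and then use associativity of $*$ to conclude $h=h*(f*g)=(h*f)*g=g$, which simultaneously yields two-sidedness and uniqueness. Your remark on (4) is the most valuable part of the proposal: as literally stated the claim is false in a noncommutative $\mathcal{A}$ (take $f(q)$ and $g(p)$ noncommuting for distinct primes $p,q$; then $(f*g)(pq)$ contains the summand $f(q)g(p)$ while $(f*g)(p)(f*g)(q)$ contains $g(p)f(q)$), and the proof must invoke the hypothesis --- which the paper states only in connection with commutativity of the convolutions, just above the proposition --- that $f(p^n)$ commutes with $g(q^m)$ for all primes $p,q$ and all exponents; combined with the factorization $f(b)=\prod_i f(p_i^{b_i})$, $g(c)=\prod_j g(q_j^{c_j})$ from the characterization of multiplicative functions, this lets you commute $f(b)$ past $g(n/a)$ term by term in the double sum over $a\mid n$, $b\mid m$, exactly as you describe. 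The only cosmetic caveat is that the empty-product case needs $f(1)=g(1)=e$ rather than merely idempotent, which is harmless under the standing conventions but worth a sentence.
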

%%%%%%%%%%%%%%%%%%%%%%%%%%%%%%%%%%%%%%%%%%%%%%%%%%%%%%%%%%%%%%%%
\begin{proposition}
1)If $\mathcal{A}$ is equipped with a norm $\|.\|$ and $f$ a multiplicative function on $A$, then $n\rightarrow\|f(n)\|$ is a multiplicative function.\\
2)If $\mathcal{A}$ is the algebra of $N\times N$-matrix and $f$ is multiplicative on $\mathbb{C}^N$, then function $n\mapsto \det(f(n))$ is  multiplicative.
 \end{proposition}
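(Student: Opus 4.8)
The plan is to verify, in each of the two cases, the defining identity of a multiplicative function directly for coprime arguments, reducing everything to the multiplicativity of $f$ composed with the relevant scalar-valued map: the norm in part~1, the determinant in part~2.

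For part~1, I would fix $n,m\in\mathbb{N}$ with $\gcd(n,m)=1$. Multiplicativity of $f$ gives $f(nm)=f(n)f(m)$, hence $\|f(nm)\|=\|f(n)f(m)\|$. To pass to $\|f(nm)\|=\|f(n)\|\,\|f(m)\|$ one needs the norm on $\mathcal{A}$ to be \emph{multiplicative}, i.e. $\|xy\|=\|x\|\,\|y\|$ for all $x,y\in\mathcal{A}$; a merely submultiplicative algebra norm only yields $\|f(nm)\|\le\|f(n)\|\,\|f(m)\|$. So I would read the hypothesis ``$\mathcal{A}$ equipped with a norm'' in this multiplicative sense (as for $\mathbb{C}$ with $|\cdot|$, or for a normed division algebra), after which the computation above is the entire argument. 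For consistency with the usual convention that a nonzero multiplicative function sends $1$ to $1$, one may also note that $f(1)$ is idempotent by the Lemma above, so that $\|f(1)\|=\|f(1)^2\|=\|f(1)\|^2$ and therefore $\|f(1)\|\in\{0,1\}$.

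For part~2, I would again fix coprime $n,m$. From $f(nm)=f(n)f(m)$ and the multiplicativity of the determinant on $N\times N$ matrices we obtain
\[
\det\big(f(nm)\big)=\det\big(f(n)f(m)\big)=\det\big(f(n)\big)\,\det\big(f(m)\big),
\]
which is precisely the assertion that $n\mapsto\det(f(n))$ is a $\mathbb{C}$-valued multiplicative arithmetical function; no extra hypothesis is needed here.

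Both parts are therefore essentially formal. The only point that deserves attention is the one flagged in part~1: the norm must be the multiplicative one and not merely submultiplicative, otherwise the statement fails (a generic algebra norm gives only an inequality). Part~2 carries no such subtlety, since $\det(XY)=\det(X)\det(Y)$ holds for all matrices, not just for those attached to coprime indices.
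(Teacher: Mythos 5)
Your argument is correct and is essentially the only natural one; the paper itself offers no proof here (it simply declares the proposition ``easy to prove''), so there is nothing more elaborate to compare against. Your one substantive observation is also well taken: part 1) as stated is false for a general (even submultiplicative) algebra norm, since $f(nm)=f(n)f(m)$ only yields $\|f(nm)\|\le\|f(n)\|\,\|f(m)\|$; equality requires the norm to satisfy $\|xy\|=\|x\|\,\|y\|$, and your remark that $\|f(1)\|\in\{0,1\}$ via idempotence of $f(1)$ is a nice consistency check. Part 2) is exactly the formal computation $\det(f(nm))=\det(f(n))\det(f(m))$ you give, reading the hypothesis as $f:\mathbb{N}\to M_N(\mathbb{C})$ multiplicative. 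In short: your proof is complete, and the caveat you flag in part 1) is a genuine gap in the statement of the proposition rather than in your argument.
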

%%%%%%%%%%%%%%%%%%%%%%%%%%%%%%%%%%%%%%%%%%%%%%%%%%%%%%%%%%%%%%%%
\subsection{Idempotents}
%%%%%%%%%%%%%%%%%%%%%%%%%%%%%%%%%%%%%%%%%%%%%%%%%%%%%%%%%%%%%%%%
\begin{definition}Let $\mathcal{A}$ be an algebra with unity $e$. We say that a collection  $\,\{P_j(n)\}_{j=0,n=1}^\infty\,$ is an arithmetic system of idempotent if the following conditions are satisfied:\\

\noindent I) for all $i,\,j=1,\,2,\dots,$ we have $P_i(n)P_j(n)=\delta_{ij}P_i$(n),\\

\noindent II) for all $n,\,j=1,\,2,\dots,$ we have $P_{j+n}(n)=P_j(n),$\\

 \noindent III) for every arithmetic progression $\{j+n,\dots,j+nr\}$ we have
$$P_{j}(n)=\sum_{k=1}^{r}P_{j+kn}(nr).$$
\end{definition}
The integer $n$ in $P_j(n)$ is called level of the idempotence.
%%%%%%%%%%%%%%%%%%%%%%%%%%%%%%%%%%%%%%%%%%%%%%%%%%%%%%%%%%%%%%%%
\begin{theorem}\label{th1}
Let $\{P_j(n)\}_{j=0,n=1}^\infty$ be an arithmetic system of idempotents of $\mathcal{A}$.
Then for arbitrary positive integers $n$, $m$ $k$ and $l$, we have
\begin{equation}
P_{k}(n)P_{l}(m)= \left\{
  \begin{array}{l l}
    P_{j}(\lcm(n,m))\quad \text{if}\quad \gcd(n,m)\mid l- k \\
    0\quad \text{otherwise}.
  \end{array} \right.
\end{equation}
where $j$ is the unique solution $\mod(\lcm(n,m))$ of the system of congruences
\begin{equation} \left\{
  \begin{array}{l l}
    j\equiv k \mod (n) \\
   j\equiv l \mod (m).
  \end{array} \right.
  \end{equation}
Here $\lcm(n,m)$ is the least common multiple of the integer $n,\,m.$
\end{theorem}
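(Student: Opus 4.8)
The plan is to reduce the general product $P_k(n)P_l(m)$ to a product of idempotents at the common level $L := \lcm(n,m)$, where condition I) can be applied directly. First I would write $r := L/n$ and $s := L/m$, so that the arithmetic progression axiom III) gives the refinements
\begin{equation*}
P_k(n) = \sum_{a=1}^{r} P_{k+an}(L), \qquad P_l(m) = \sum_{b=1}^{s} P_{l+bm}(L).
\end{equation*}
Multiplying these two sums and using orthogonality I) at level $L$, every cross term $P_{k+an}(L)\,P_{l+bm}(L)$ vanishes unless $k+an \equiv l+bm \pmod{L}$, in which case it equals $P_{k+an}(L)$. So the whole product collapses to $\sum P_{k+an}(L)$, the sum taken over those pairs $(a,b)$ with $1\le a\le r$, $1\le b\le s$, and $k+an \equiv l+bm \pmod L$.

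The next step is the elementary number theory: count and identify those surviving pairs. The congruence $an - bm \equiv l-k \pmod L$ in the unknowns $a,b$ is solvable if and only if $\gcd(n,m,L)\mid l-k$, and since $\gcd(n,m,L)=\gcd(n,m)$, this is exactly the stated condition $\gcd(n,m)\mid l-k$. When it fails, the product is $0$; when it holds, I would argue that the set of residues $\{\,k+an \bmod L : (a,b)\text{ a solution}\,\}$ is a single residue class. Indeed, if $j$ denotes the unique solution mod $L$ of the CRT-type system $j\equiv k \pmod n$, $j\equiv l \pmod m$ (which exists precisely because $\gcd(n,m)\mid l-k$, by the generalized Chinese Remainder Theorem), then $k+an\equiv j$ and $l+bm\equiv j$ hold for every surviving pair, so $P_{k+an}(L)=P_j(L)$ throughout and the sum telescopes to a single term $P_j(L)$ (the coefficient being $1$, since distinct $(a,b)$ give the same residue but each contributes $P_j(L)$ only once after collecting — more precisely, exactly one value of $a$ in $\{1,\dots,r\}$ yields $k+an\equiv j\pmod L$, namely the one with $an\equiv j-k$, and for that $a$ there is exactly one valid $b$).

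The main obstacle I anticipate is this last bookkeeping point: making sure the coefficient of $P_j(L)$ is exactly $1$ and not some larger multiple. The key observation is that $k+an \bmod L$ runs over all residues congruent to $k$ mod $n$ as $a$ ranges over $1,\dots,r$, each exactly once; so there is a unique $a$ with $k+an\equiv j\pmod L$. Fixing that $a$, the equation $k+an\equiv l+bm\pmod L$ then pins down $b\bmod (L/\gcd\text{-stuff})$, and one checks it has exactly one solution in $\{1,\dots,s\}$. Once this is settled, the product is literally $P_j(L)$, which is what we want. A clean alternative that sidesteps the counting is to note $P_k(n)P_l(m) = P_k(n)P_j(L)P_l(m)$ after inserting the refinements and using that $P_j(L)$ is the unique common sub-idempotent: since $j\equiv k\pmod n$ we have $P_j(L)\le P_k(n)$ in the sense that $P_k(n)P_j(L)=P_j(L)$ (again from III) and I)), and likewise $P_j(L)P_l(m)=P_j(L)$, giving the result immediately. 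I would present the argument in the second, slicker form, with the number-theoretic solvability criterion $\gcd(n,m)\mid l-k$ extracted as a preliminary lemma.
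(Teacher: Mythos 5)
Your proof is correct, and it follows the same basic strategy as the paper's --- refine both factors via axiom III) to a common level, kill the cross terms with orthogonality I), and invoke the generalized Chinese Remainder Theorem --- but with one genuinely different and better choice: you refine to level $L=\lcm(n,m)$, whereas the paper refines to level $nm$. That choice changes the endgame. In the paper's version, after orthogonality one is left with $\gcd(n,m)$ surviving terms $P_{j+r'\lcm(n,m)}(nm)$, $0\le r'\le \gcd(n,m)-1$, which must then be re-assembled via a second application of III) into $P_j(\lcm(n,m))$; this is exactly the step where the paper's bookkeeping gets delicate (the condition for a term to survive is stated as $rn=sm$ rather than a congruence mod $nm$, and the parametrization $r=r'\tfrac{n}{\gcd(n,m)}$, $s=r'\tfrac{m}{\gcd(n,m)}$ has the roles of $n$ and $m$ swapped). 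Your version avoids that second collapse entirely: at level $L$ the two index sets $\{k+an \bmod L\}$ and $\{l+bm \bmod L\}$ are precisely the residue classes of $k$ mod $n$ and of $l$ mod $m$ inside $\mathbb{Z}/L\mathbb{Z}$, whose intersection is empty or the single class $j$, so the product is $0$ or $P_j(L)$ with coefficient exactly $1$ --- and your counting of the unique pair $(a,b)$ settles the coefficient cleanly. Two small points to make explicit when you write it up: (i) the indices $k+an$, $a=1,\dots,L/n$, are pairwise incongruent mod $L$, so the refinement really is a sum of distinct members of the orthogonal family at level $L$ (you use this implicitly); and (ii) orthogonality I) must be read with indices taken mod the level, consistent with periodicity II), since the shifted indices exceed $L$ --- the paper makes the same implicit convention. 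Your ``slicker'' second formulation is fine as a presentation device, but it is really the same intersection-of-index-sets argument in disguise, so I would keep the explicit counting as the actual proof.
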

%%%%%%%%%%%%%%%%%%%%%%%%%%%%%%%%%%%%%%%%%%%%%%%%%%%%%%%%%%%%%%%%
\begin{lemma}(The Chinese Remainder Theorem)\\
The system of congruences
\begin{equation} \left\{
  \begin{array}{l l}
    x\equiv a \mod (n) \\
   x\equiv b \mod (m).
  \end{array} \right.
  \end{equation}
is solvable if, and only if $\gcd(n,m)|a-b,$ any two solutions of the system are incongruent mod $ \lcm(n,m)$.
\end{lemma}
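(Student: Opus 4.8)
The plan is to prove the two parts of the statement --- the solvability criterion and the uniqueness of a solution modulo $\lcm(n,m)$ --- separately, using nothing beyond elementary divisibility and B\'ezout's identity. Write $d=\gcd(n,m)$ throughout.

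The necessity of the condition $d\mid a-b$ is immediate: if $x$ satisfies both congruences, then $n\mid x-a$ and $m\mid x-b$, and since $d$ divides both $n$ and $m$ it divides $x-a$ and $x-b$, hence also their difference $(x-a)-(x-b)=b-a$. The real content is the converse, where I would exhibit an explicit solution. Assuming $d\mid a-b$, write $a-b=dc$ with $c\in\Z$, and choose (by B\'ezout) integers $u,v$ with $un+vm=d$. I would then take $x_0:=a-ucn$, which also equals $b+vcm$ because the difference of these two expressions is $(a-b)-c(un+vm)=dc-cd=0$. From the first form $x_0\equiv a\pmod n$, and from the second $x_0\equiv b\pmod m$, so $x_0$ solves the system.

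For uniqueness, if $x$ and $x'$ both solve the system then $x-x'$ is a common multiple of $n$ and $m$, hence a multiple of $\lcm(n,m)$, so $x\equiv x'\pmod{\lcm(n,m)}$; conversely, any integer congruent to $x_0$ modulo $\lcm(n,m)$ is again a solution, so the set of solutions is precisely one residue class mod $\lcm(n,m)$. I do not expect a genuine obstacle here --- the one step deserving a moment of care is verifying that the two forms $a-ucn$ and $b+vcm$ of $x_0$ coincide, i.e. that the B\'ezout combination and the factor $c=(a-b)/d$ cancel with the correct signs. For context, this lemma is exactly the tool needed in Theorem~\ref{th1}: it converts the divisibility condition $\gcd(n,m)\mid l-k$ into the existence, and well-definedness modulo $\lcm(n,m)$, of the index $j$ for which $P_k(n)P_l(m)=P_j(\lcm(n,m))$.
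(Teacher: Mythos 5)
Your proof is correct and complete, but there is nothing in the paper to compare it against: the lemma is stated as the classical generalized Chinese Remainder Theorem and left unproved (the \verb|proof| environment that follows it in the source is the proof of Theorem~\ref{th1}, not of the lemma). Your elementary argument via B\'ezout's identity is the standard one and does fill that gap: necessity of $d\mid a-b$ follows since $d$ divides both $x-a$ and $x-b$; for sufficiency the two expressions $a-ucn$ and $b+vcm$ indeed coincide because their difference is $(a-b)-c(un+vm)=dc-cd=0$, and each form makes one of the two congruences manifest; and any two solutions differ by a common multiple of $n$ and $m$, hence by a multiple of $\lcm(n,m)$. One remark worth recording: the lemma as printed says any two solutions are \emph{incongruent} mod $\lcm(n,m)$, which is evidently a typo for \emph{congruent} --- it is the ``congruent'' version that your uniqueness step proves, and it is that version which Theorem~\ref{th1} needs in order for the index $j$ to be well defined modulo $\lcm(n,m)$.
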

%%%%%%%%%%%%%%%%%%%%%%%%%%%%%%%%%%%%%%%%%%%%%%%%%%%%%%%%%%%%%%%%
\begin{proof}From condition $III)$ we can write
\begin{align*}
P_{k}(n)P_{l}(m)&=\sum_{s=0}^{m-1}\sum_{r=0}^{n-1}P_{k+sn}(nm)P_{l+rm}(nm).
\end{align*}
If $\gcd(n,m)\nmid l- k$, then  $k+sn \neq l+rm$ and $P_{k}(n)P_{l}(m)=0$. \\If $\gcd(n,m)\mid l- k,$ then by Chinese remainder theorem there exist unique integer $j$ $\mod (\lcm(n,m))$ such that \begin{equation} \left\{
  \begin{array}{l l}
    j\equiv k \mod (n) \\
   j\equiv l \mod (m).
  \end{array} \right.
  \end{equation}Then from $II)$ we get
  \begin{align*}
P_{k}(n)P_{l}(m)&=\sum_{s=0}^{m-1}\sum_{r=0}^{n-1}P_{j+sn}(nm)P_{j+rm}(nm).
\end{align*}
On the other hand,  from $I)$ we see that for $0\leq r\leq n-1$ and  $0\leq s\leq m-1$ the term $P_{j+rn}(nm)P_{j+sm}(nm)$ is different from zero, if and only if $rn =sm$, which is equivalent to $r=r' \frac{n}{\gcd(n,m)}$ and $s=r' \frac{m}{\gcd(n,m)}$, with $0\leq r'\leq \gcd(n,m)-1$.\\Then
\begin{align*}
P_{k}(n)P_{l}(m)&=\sum_{r'=0}^{\gcd(n,m)-1}P_{j+r'\lcm(n,m)}(nm)=P_j(\lcm(n,m)).
\end{align*}
Therefore
\begin{align*}
P_{k}(n)P_{l}(m)=\left\{
  \begin{array}{l l}
    P_{j}(\lcm(n,m))\quad \text{if}\quad \gcd(n,m)\mid l- k \\
    0\quad \text{otherwise}
  \end{array} \right.
\end{align*}
where $j$ is the unique integer $\mod (\lcm(n,m))$ such that \begin{equation} \left\{
  \begin{array}{l l}
    j\equiv l \mod (n) \\
   j\equiv k \mod (m).
  \end{array} \right.
  \end{equation}
\end{proof}
%%%%%%%%%%%%%%%%%%%%%%%%%%%%%%%%%%%%%%%%%%%%%%%%%%%%%%%%%%%%%%%%
\begin{corollary}The following hold\\
1. If $n$ and $m$ are relatively prime, we have
\begin{equation*}
P_{j}(n)P_{j}(m)=P_{j}(nm).\label{f3}
\end{equation*}
2. If $n\mid m$, we have
\begin{equation*}
P_{j}(n)P_k(m)=\left\{
  \begin{array}{l l}
    P_k(m)\,\,\, \text{if} \,\,\,k\equiv j \mod(n) \\
   0\,\,\,\text{otherwise} .
  \end{array} \right.
  \end{equation*}
\end{corollary}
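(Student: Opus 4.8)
The plan is to obtain both assertions as direct specializations of Theorem~\ref{th1}: one simply substitutes the appropriate upper indices and levels, and then simplifies the $\gcd$ and $\lcm$ that appear. Condition III of the definition is not invoked again; everything is extracted from the theorem together with the uniqueness clause of the Chinese Remainder Theorem (the Lemma above).

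For (1), apply Theorem~\ref{th1} to the product $P_j(n)P_j(m)$, i.e.\ take both upper indices equal to $j$. The side condition $\gcd(n,m)\mid (j-j)$ holds trivially, so the theorem gives $P_j(n)P_j(m)=P_{j'}\big(\lcm(n,m)\big)$, where $j'$ is the unique residue modulo $\lcm(n,m)$ solving $j'\equiv j \pmod n$ and $j'\equiv j\pmod m$. Since $\gcd(n,m)=1$ we have $\lcm(n,m)=nm$, and $j'=j$ visibly solves the system; by uniqueness modulo $\lcm(n,m)=nm$ we conclude $j'\equiv j$, hence $P_j(n)P_j(m)=P_j(nm)$.

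For (2), assume $n\mid m$, so $\gcd(n,m)=n$ and $\lcm(n,m)=m$. Applying Theorem~\ref{th1} to $P_j(n)P_k(m)$ (first upper index $j$ at level $n$, second upper index $k$ at level $m$), the product vanishes unless $n=\gcd(n,m)$ divides $k-j$, that is unless $k\equiv j\pmod n$; this is precisely the stated dichotomy. When $k\equiv j\pmod n$, the theorem yields $P_j(n)P_k(m)=P_{j'}(m)$ with $j'$ the unique residue mod $m$ satisfying $j'\equiv j\pmod n$ and $j'\equiv k\pmod m$. Because $n\mid m$, the congruence $j'\equiv k\pmod m$ already forces $j'\equiv k\equiv j\pmod n$, so the system is consistent and is solved by $j'=k$; by uniqueness modulo $\lcm(n,m)=m$ we get $j'\equiv k$, whence $P_j(n)P_k(m)=P_k(m)$.

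There is no genuine obstacle here beyond this bookkeeping. The only point deserving a moment's care is to check, in each case, that the particular residue exhibited ($j$ in the first case, $k$ in the second) really is the solution produced by Theorem~\ref{th1} — and this is exactly what the uniqueness-modulo-$\lcm$ clause of the Chinese Remainder Theorem guarantees. One may also remark that (1) is the instance of the argument where $\gcd(n,m)=1$ collapses the congruence system, while (2) is the instance where the divisibility $n\mid m$ makes one congruence subsume the other.
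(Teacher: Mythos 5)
Your proof is correct and is essentially the argument the paper intends: the corollary is stated without a separate proof precisely because, as you show, both parts are immediate specializations of Theorem~\ref{th1} once one computes $\gcd$ and $\lcm$ in each case and identifies the unique CRT residue ($j$ when $\gcd(n,m)=1$, and $k$ when $n\mid m$). Nothing further is needed.
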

\begin{proposition}Let $\alpha,\,\beta: \,\mathbb{N}\rightarrow \mathbb{C}$ two arithmetic functions and $j=0,1\dots$, we have
\begin{align}
&\alpha P_j\,\square \,\beta P_j=(\alpha \square \beta) P_j,\label{b1}\\& \alpha P_j\sqcup \beta P_j=(\alpha \sqcup \beta)\, P_j, \label{b2}\\&
\nu_0* (\alpha \square \beta )P_j=(\nu_0*\alpha P_j)\,(\nu_0*\beta P_j)\label{b3}.
\end{align}
In particular,
\begin{align}
P_j\,\square \,P_{j}(n)=M_2(n)P_{j}(n),\quad P_j\sqcup P_{j}(n)=2^{\omega(n)}P_{j}(n)
\end{align}
where
\begin{equation}
M_s(n)=\left\{
  \begin{array}{l l} 1
    \quad \text{if}\,\,\,n=1, \\
    \prod_{k=1}^r\big((a_s+1)^s-a_k^s\big)\quad\,\text{if}\quad n=\prod_{k=1}^rp_k^{a_k},
  \end{array} \right.  \label{S1}
\end{equation}
and $\nu_0(n)=1.$
\end{proposition}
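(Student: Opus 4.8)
The plan is to reduce each identity to Theorem~\ref{th1} and its Corollary, writing $\alpha P_j$ for the function $n\mapsto\alpha(n)P_j(n)$; in particular $P_j$ is the function $n\mapsto P_j(n)=\nu_0(n)P_j(n)$ and $\nu_0$ is $n\mapsto\nu_0(n)e=e$. The one fact needed about the idempotents is the case of Theorem~\ref{th1} in which the two lower indices coincide: since $x=j$ solves $x\equiv j\pmod{k}$ and $x\equiv j\pmod{l}$, one automatically has $\gcd(k,l)\mid j-j=0$, whence
\[
P_j(k)\,P_j(l)=P_j(\lcm(k,l))\qquad\text{for all }k,l\ge 1 .
\]

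For \eqref{b1} I would expand by the definition of the lcm-product and apply this relation:
\[
(\alpha P_j\,\square\,\beta P_j)(n)=\sum_{\lcm(k,l)=n}\alpha(k)\beta(l)\,P_j(k)P_j(l)
=\Big(\sum_{\lcm(k,l)=n}\alpha(k)\beta(l)\Big)P_j(n)=(\alpha\square\beta)(n)\,P_j(n),
\]
the scalar weights $\alpha(k)\beta(l)$ making the order of multiplication irrelevant. Identity \eqref{b2} is the same computation under the constraint $\gcd(k,l)=1$, where $\lcm(k,l)=kl=n$, so that every term carries the common factor $P_j(n)$ and the scalar part collapses to $(\alpha\sqcup\beta)(n)$ (alternatively one invokes part~1 of the Corollary).

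For \eqref{b3} I would evaluate both sides at $n$. On the right, $\nu_0$ acting as $e$ gives $(\nu_0*\alpha P_j)(n)=\sum_{d\mid n}\alpha(d)P_j(d)$ and $(\nu_0*\beta P_j)(n)=\sum_{e\mid n}\beta(e)P_j(e)$, so their product equals $\sum_{d\mid n}\sum_{e\mid n}\alpha(d)\beta(e)\,P_j(d)P_j(e)=\sum_{d\mid n}\sum_{e\mid n}\alpha(d)\beta(e)\,P_j(\lcm(d,e))$. On the left, $\big(\nu_0*(\alpha\square\beta)P_j\big)(n)=\sum_{b\mid n}(\alpha\square\beta)(b)\,P_j(b)=\sum_{b\mid n}\ \sum_{\lcm(d,e)=b}\alpha(d)\beta(e)\,P_j(b)$; since letting $b$ run over the divisors of $n$ and $(d,e)$ over the pairs with $\lcm(d,e)=b$ amounts to letting $(d,e)$ run over all pairs with $d\mid n$ and $e\mid n$ (using $\lcm(d,e)\mid n\iff d\mid n\text{ and }e\mid n$, and then $b=\lcm(d,e)$), the left side also equals $\sum_{d\mid n}\sum_{e\mid n}\alpha(d)\beta(e)\,P_j(\lcm(d,e))$, which proves \eqref{b3}.

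Finally, the two displayed formulas follow by taking $\alpha=\beta=\nu_0$ in \eqref{b1} and \eqref{b2}, which give $P_j\,\square\,P_j=(\nu_0\square\nu_0)P_j$ and $P_j\sqcup P_j=(\nu_0\sqcup\nu_0)P_j$; it then remains to identify the two scalar counting functions. The number $(\nu_0\square\nu_0)(n)$ of ordered pairs $(k,l)$ with $\lcm(k,l)=n$ is multiplicative, and at a prime power $p^a$ it equals $(a+1)^2-a^2=2a+1$ (pairs of exponents in $\{0,\dots,a\}$ minus those in $\{0,\dots,a-1\}$), so for $n=\prod_k p_k^{a_k}$ it equals $\prod_k\big((a_k+1)^2-a_k^2\big)=M_2(n)$. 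Likewise $(\nu_0\sqcup\nu_0)(n)$ counts the coprime ordered factorisations $n=kl$, and since each prime power $p_k^{a_k}$ must be assigned in full either to $k$ or to $l$ there are exactly $2^{\omega(n)}$ of them. The argument is essentially bookkeeping; the only steps that need care are the double-to-single summation in \eqref{b3} and these prime-power counts.
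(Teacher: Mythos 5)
Your proof is correct, and for \eqref{b1} and \eqref{b2} it coincides with the paper's argument: both reduce to the special case $P_j(k)P_j(l)=P_j(\lcm(k,l))$ of Theorem~\ref{th1} (which you justify correctly, since $\gcd(k,l)\mid j-j=0$ always holds and $j$ itself solves the two congruences). Where you genuinely diverge is \eqref{b3}: the paper simply cites Lehmer's scalar identity $(\nu_0\ast\alpha)(\nu_0\ast\beta)=\nu_0\ast(\alpha\,\Box\,\beta)$ and declares \eqref{b3} a consequence, whereas you prove the operator-valued version from scratch by expanding both sides into the double sum $\sum_{d\mid n}\sum_{e\mid n}\alpha(d)\beta(e)P_j(\lcm(d,e))$, using the reindexing $\lcm(d,e)\mid n\iff d\mid n$ and $e\mid n$ together with the same product rule for the $P_j$'s. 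Your route is arguably more complete: the cited scalar identity does not by itself give the operator statement, since the right-hand side of \eqref{b3} involves the noncommutative products $P_j(d)P_j(e)$, and it is precisely the relation $P_j(d)P_j(e)=P_j(\lcm(d,e))$ that makes the scalar proof transfer --- a step the paper leaves implicit and you make explicit. You also verify the two ``in particular'' formulas (the counts $(\nu_0\,\Box\,\nu_0)(p^a)=(a+1)^2-a^2$ and $(\nu_0\sqcup\nu_0)(n)=2^{\omega(n)}$), which the paper's proof omits entirely; in doing so you correctly read the paper's garbled formula for $M_s(n)$ as $\prod_k\big((a_k+1)^s-a_k^s\big)$.
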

%%%%%%%%%%%%%%%%%%%%%%%%%%%%%%%%%%%%%%%%%%%%%%%%%%%%%%%%%%%%%%%%
\begin{proof}Let $n$ be a positive integer, we have
\begin{align*}
\alpha P_j\,\square \,\beta P_{j}(n)&=\sum_{\lcm(k,l)=n}\alpha(k)\beta (l)P_j(k)P_j(l)\\&
=\sum_{\lcm(k,l)=n}\alpha(k)\beta (l)P_j(\lcm(k,l))\\&
=(\alpha\,\square \,\beta) (n)P_{j}(n).
\end{align*}
To prove \eqref{b2}
\begin{align*}
\alpha P_j\,\sqcup \,\beta P_{j}(n)&=\sum_{kl=n, \gcd(k,l)=1}\alpha(k)\beta (l)P_j(k)P_j(l)\\&
=\sum_{kl=n, \gcd(k,l)=1}\alpha(k)\beta (l)P_{j}(n)\\&
=(\alpha\sqcup \beta) (n)P_{j}(n).
\end{align*}
The equation \eqref{b3} follows from the following identity \cite{leh}
\begin{equation}
(\nu_0\ast \alpha)(\nu_0\ast\alpha)=\nu_0\ast (\alpha\Box \beta).
\end{equation}
\end{proof}
%%%%%%%%%%%%%%%%%%%%%%%%%%%%%%%%%%%%%%%%%%%%%%%%%%%%%%%%%%%%%%%%
\section{Ramanujan sum's}
%%%%%%%%%%%%%%%%%%%%%%%%%%%%%%%%%%%%%%%%%%%%%%%%%%%%%%%%%%%%%%%%
The function $\alpha: \,\mathbb{N}\rightarrow\,\mathbb{C}$ is called even function ($\mod\,d$ ) if $\alpha(n) = \alpha(\gcd(n,d))$ for
all $n$, that is if the value $\alpha(n)$ depends only on the $\gcd (n, d)$. Hence if $\alpha$ is even($\mod\,d$ ), then it is sufficient to know the values $f(r)$, where $r \mid d$. \\ The Ramanujan
sum's \begin{equation}  \label{Ramanujan_sum}
c_n(j):= \sum_{\substack{\gcd(k,n)=1\\1\leq k\leq n}} \varepsilon_n^{jk}.
\end{equation}
If $\alpha$ is even ($\mod\,d$ ), then it has a Ramanujan-Fourier expansion of
the form
\begin{equation*}
\alpha(n)= \sum_{r\mid d} (\mathcal{R}_\alpha)(r) c_r(n) \qquad (n\in \N),
\end{equation*}
where the (Ramanujan-)Fourier coefficients $(\mathcal{R}_\alpha)(r)$ are
uniquely determined and given by
\begin{equation*}
(\mathcal{R}_\alpha)(r)=  \sum_{\delta\mid d} \alpha(\tfrac{d}{\delta})c_{\delta}(\tfrac{d}{r}).
\end{equation*}
%%%%%%%%%%%%%%%%%%%%%%%%%%%%%%%%%%%%%%%%%%%%%%%%%%%%%%%%%%%%%%%%
\begin{definition}We say that a set  $\{P_j(n): n\geq 1,j\geq 0\}$ is an arithmetic system of orthogonal idempotent, if\\
1. $\{P_j(n)\}_{j,n=1}^\infty$ is an arithmetic system of idempotent,\\
2. for every $n$
\begin{equation}
\sum_{j=0}^{n-1}P_j(n)=e.
\end{equation}
\end{definition}
%%%%%%%%%%%%%%%%%%%%%%%%%%%%%%%%%%%%%%%%%%%%%%%%%%%%%%%%%%%%%%%%
Consider the sum
\begin{align}
C_j(n)=\sum_{\substack{\gcd(k,n)=1\\1\leq k\leq n}}\varepsilon_n^{-jk}S^{k}(n),\quad j=0,\,1,\,\dots,
\end{align}
where $
S(n)=\sum_{j=1}^{n}\varepsilon_n^{j}P_j(n).$\\
For every divisor $r$ of $n$, we denote by
\begin{align}
T_{r,j}(n)=\sum_{\substack{\gcd(k,n)=\tfrac{n}{r}\\1\leq k\leq n}}P_{k+j}(n).
\end{align}
Note that $T_{n,j}(n)$ is denoted simply by $T_{j}(n)$.
%%%%%%%%%%%%%%%%%%%%%%%%%%%%%%%%%%%%%%%%%%%%%%%%%%%%%%%%%%%%%%%%
\begin{proposition}The following properties hold
\begin{equation*}
C_j(n)=\mu*\nu_1P_j(n),
\quad \& \quad
T_j(n)=\nu_0*\mu P_j(n),
\end{equation*}
where  $\nu_k(n)=n^k.$
\end{proposition}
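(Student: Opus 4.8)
The plan is to unwind both identities from the definition of the Dirichlet product and the basic algebra of the idempotents established in Theorem~\ref{th1} and its corollary.

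\emph{First identity.} I would start from the definition
\[
(\mu*\nu_1 P_j)(n)=\sum_{dd'=n}\mu(d)\,\nu_1(d')\,P_j(d')=\sum_{d\mid n}\mu(d)\,\frac{n}{d}\,P_j\!\left(\tfrac{n}{d}\right),
\]
where the arithmetic function $P_j$ is read as $d'\mapsto P_j(d')$. On the other side, I would expand $S(n)^k$ using orthogonality of the $P_i(n)$: since $S(n)=\sum_{i=1}^n\varepsilon_n^{i}P_i(n)$ and $P_i(n)P_{i'}(n)=\delta_{ii'}P_i(n)$ with $\sum_i P_i(n)=e$, one gets $S(n)^k=\sum_{i=1}^n\varepsilon_n^{ik}P_i(n)$, and hence
\[
C_j(n)=\sum_{\substack{\gcd(k,n)=1\\1\le k\le n}}\varepsilon_n^{-jk}\sum_{i=1}^n\varepsilon_n^{ik}P_i(n)
=\sum_{i=1}^n\Bigl(\sum_{\substack{\gcd(k,n)=1\\1\le k\le n}}\varepsilon_n^{(i-j)k}\Bigr)P_i(n)
=\sum_{i=1}^n c_n(i-j)\,P_i(n),
\]
using the classical Ramanujan sum $c_n(m)=\sum_{\gcd(k,n)=1}\varepsilon_n^{mk}$. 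Now invoke the standard evaluation $c_n(m)=\sum_{d\mid\gcd(m,n)}d\,\mu(n/d)$ and substitute; after interchanging the order of summation the coefficient of $P_i(n)$ collapses, via part~2 of the Corollary (which says $P_j(d')P_i(n)=P_i(n)$ exactly when $i\equiv j\bmod d'$ for $d'\mid n$), to exactly the terms $\mu(d)\,\tfrac{n}{d}$ with $\tfrac nd$ playing the role of $d'$. Matching the two expansions term by term gives $C_j(n)=(\mu*\nu_1 P_j)(n)$.

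\emph{Second identity.} The same strategy applies to $T_j(n)=\sum_{\gcd(k,n)=n}P_{k+j}(n)$: here $\gcd(k,n)=n$ with $1\le k\le n$ forces $k=n$, so at first glance $T_j(n)=P_{n+j}(n)=P_j(n)$ by periodicity (condition~II). I would instead keep the structural form: $(\nu_0*\mu P_j)(n)=\sum_{dd'=n}\mu(d')P_j(d')$, and expand each $P_j(d')$ with $d'\mid n$ into level-$n$ idempotents using condition~III, namely $P_j(d')=\sum_{\substack{1\le k\le n\\ k\equiv j(d')}}P_k(n)$. Collecting, the coefficient of $P_k(n)$ becomes $\sum_{d'\mid\gcd(k-j,\,n)}\mu(n/d')$, which is the indicator that $\gcd(k-j,n)=n$, i.e.\ that $k\equiv j\bmod n$; summing over $1\le k\le n$ then reproduces $T_j(n)$ as defined (reading $T_j(n)$ with the convention that $P_{k+j}(n)$ ranges over the residue class, consistent with the displayed definition of $T_{r,j}(n)$ for $r=n$).

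\emph{Main obstacle.} The delicate point is bookkeeping the index shifts and the reading of $P_j$ as an arithmetic function: one must be careful that in $\mu*\nu_1P_j$ the symbol $P_j$ denotes the \emph{multiplicative function} $n\mapsto P_j(n)$ (whose multiplicativity is guaranteed by the Corollary), not a fixed algebra element, and that the Möbius inversion identity for $c_n$ is being applied with the correct divisor running. The rest is the routine interchange of finite sums and the orthogonality relations from Theorem~\ref{th1}; I expect no analytic difficulty, only the combinatorial care of aligning $\gcd(k-j,n)=n$ with the support condition $d'\mid k-j$ after Möbius inversion.
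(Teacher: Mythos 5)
Your treatment of the first identity is correct, though it runs in the opposite direction from the paper's: you diagonalize $C_j(n)=\sum_{i=1}^n c_n(i-j)P_i(n)$ in the level-$n$ idempotents and then appeal to the von Sterneck evaluation $c_n(m)=\sum_{d\mid\gcd(m,n)}d\,\mu(n/d)$ together with condition III (equivalently, part 2 of the Corollary plus completeness) to regroup; the paper instead starts from $(\mu*\nu_1P_j)(n)=\sum_{d\mid n}\mu(d)\tfrac nd P_j(\tfrac nd)$, writes $\tfrac nd P_j(\tfrac nd)=\sum_{k=1}^{n/d}\varepsilon_n^{-jkd}S(n)^{kd}$, and only needs the elementary fact that $\sum_{d\mid\gcd(k,n)}\mu(d)$ is $1$ or $0$ according as $\gcd(k,n)=1$ or not. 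Both routes are sound; yours invokes a slightly heavier classical input but is otherwise the same bookkeeping.

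The second identity as you wrote it contains a genuine error, caused by misreading the definition of $T_j(n)$. By definition $T_j(n)=T_{n,j}(n)$, and $T_{r,j}(n)$ sums over $\gcd(k,n)=\tfrac nr$; taking $r=n$ gives the condition $\gcd(k,n)=1$, not $\gcd(k,n)=n$. So $T_j(n)=\sum_{\gcd(k,n)=1,\,1\le k\le n}P_{k+j}(n)$, which is emphatically not $P_j(n)$: for $n=p$ prime, $(\nu_0*\mu P_j)(p)=P_j(1)-P_j(p)=e-P_j(p)$, the sum of the $p-1$ idempotents $P_k(p)$ with $k\not\equiv j\pmod p$, whereas $P_j(p)$ is a single one. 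Your structural computation is then forced off track: expanding $(\nu_0*\mu P_j)(n)=\sum_{d'\mid n}\mu(d')P_j(d')$ and writing $P_j(d')=\sum_{k\equiv j\,(d')}P_k(n)$ via condition III, the coefficient of $P_k(n)$ is $\sum_{d'\mid\gcd(k-j,n)}\mu(d')$ --- note $\mu(d')$, not $\mu(n/d')$, since $\nu_0\equiv 1$ contributes nothing --- and this equals the indicator of $\gcd(k-j,n)=1$, not of $\gcd(k-j,n)=n$. With these two corrections your argument closes and recovers exactly $T_j(n)=\sum_{\gcd(k-j,n)=1}P_k(n)$; it is then the paper's own proof read backwards (the paper inserts $\sum_{\delta\mid\gcd(k,n)}\mu(\delta)$ into the defining sum for $T_j(n)$ and regroups with condition III to reach $\sum_{\delta\mid n}\mu(\delta)P_j(\delta)$).
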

%%%%%%%%%%%%%%%%%%%%%%%%%%%%%%%%%%%%%%%%%%%%%%%%%%%%%%%%%%%%%%%%
\begin{proof} We have
\begin{align*}
\mu*\nu_1 P_j(n)&=\sum_{d\mid n}\mu(d)\tfrac{n}{d}P_j(\tfrac{n}{d})\\&=\sum_{d\mid n}\mu(d)\sum_{k=1}^{\tfrac{n}{d}}\varepsilon_n^{-jkd}S_n^{kd}\\&=
\sum_{k=1}^n\varepsilon_n^{-jk}S_n^{k}\sum_{\substack{d\mid n\\d\mid k} }\mu(d)\\&
=\sum_{k=1}^n\varepsilon_n^{-jk}S_n^{k}\sum_{\substack{d\mid \gcd(k,n)} }\mu(d).
\end{align*} From Theorem 2.1 in \cite{Apo} we have
\begin{equation}
\sum_{d \mid \gcd(k,n)}\mu(r)=\left\{
  \begin{array}{l l}
    1\quad\, \text{if}\quad \gcd(k,n)=1, \\
   0\quad\, \text{if}\quad \gcd(k,n)\,>1.
  \end{array} \right.  \label{S1}
\end{equation}
Hence
\begin{equation}
C_j(n)=\mu*\nu_1 P_j(n).
\end{equation}
To prove 2) \begin{align*}
T_{j}(n)&=\sum_{\substack{\gcd(k,n)=1\\1\leq k\leq n}}P_{j+k}(n)\\&=\sum_{k=1}^n\sum_{\substack{\delta \mid \gcd(k,n)}}\mu(\delta)P_{j+k}(n)\\&=
\sum_{\substack{\delta \mid n}}\mu(\delta)\sum_{k=1}^{\tfrac{n}{\delta}} P_{j+k\delta}(n)
\\&=
\sum_{\substack{\delta \mid n}}\mu(\delta) P_{j}(\delta)
\end{align*}
\end{proof}
%%%%%%%%%%%%%%%%%%%%%%%%%%%%%%%%%%%%%%%%%%%%%%%%%%%%%%%%%%%%%%%%
\begin{corollary}
\begin{align*}
&C_j(n)=n\prod_{l=1}^N\big(P_j(p^{\alpha_l})-
\frac{1}{p}P_j(p^{\alpha_l-1}),\\&T_j(n)=\prod_{\substack{p\mid n\\p \,\,\text{prime}}}(e-P_j(p)).
\end{align*}
\end{corollary}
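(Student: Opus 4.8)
The goal is to evaluate the two product expressions in the Corollary starting from the closed forms $C_j(n)=\mu*\nu_1\,P_j(n)$ and $T_j(n)=\nu_0*\mu\,P_j(n)$ established in the preceding Proposition, so the plan is to exploit multiplicativity and reduce everything to prime powers.

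\medskip

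\textbf{Step 1: Reduce to prime powers via multiplicativity.} First I would observe that for each fixed $j$ the map $n\mapsto P_j(n)$ is multiplicative (this is the running hypothesis on an arithmetic system of idempotents, and follows from part~1 of the Corollary in Section~2). Since $\mu$, $\nu_0$, $\nu_1$ are multiplicative and the Dirichlet product of multiplicative functions is multiplicative (Proposition, part~4, in Section~2), both $C_j=\mu*\nu_1 P_j$ and $T_j=\nu_0*\mu P_j$ are multiplicative in $n$. Writing $n=\prod_{l=1}^N p_l^{\alpha_l}$ in standard form, it therefore suffices to compute $C_j(p^\alpha)$ and $T_j(p^\alpha)$ and take the product; here one must be mildly careful that the values $P_j(p_l^{\alpha_l})$ for distinct primes commute, which holds because they are mutually orthogonal-compatible via Theorem~\ref{th1}, so rearranging the product causes no trouble.

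\medskip

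\textbf{Step 2: Evaluate on a prime power.} For $T_j$: the Dirichlet sum over divisors of $p^\alpha$ collapses because $\mu$ vanishes on non-squarefree arguments, leaving only $d=1$ and $d=p$, so
\begin{align*}
T_j(p^\alpha)=\sum_{d\mid p^\alpha}\mu(d)P_j(d)=P_j(1)-P_j(p)=e-P_j(p),
\end{align*}
using $P_j(1)=e$ (the orthogonal-system condition $\sum_{i}P_i(1)=e$ with a single summand, or directly from Lemma on $f(1)$). Taking the product over $p\mid n$ gives $T_j(n)=\prod_{p\mid n}(e-P_j(p))$, the second claimed identity. For $C_j$: similarly $\mu*\nu_1$ on $p^\alpha$ is $\sum_{d\mid p^\alpha}\mu(d)\frac{p^\alpha}{d}P_j(p^\alpha/d)=p^\alpha P_j(p^\alpha)-p^{\alpha-1}P_j(p^{\alpha-1})=p^\alpha\big(P_j(p^\alpha)-\tfrac1p P_j(p^{\alpha-1})\big)$. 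Multiplying over the $N$ prime powers and collecting the factors $\prod p_l^{\alpha_l}=n$ out front yields
\begin{align*}
C_j(n)=n\prod_{l=1}^N\Big(P_j(p_l^{\alpha_l})-\tfrac1{p_l}P_j(p_l^{\alpha_l-1})\Big),
\end{align*}
which is the first claimed identity.

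\medskip

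\textbf{Main obstacle.} The only genuinely delicate point is the commutativity needed to make "the product over primes" well-defined and to match the multiplicative structure: one needs that $P_j(p^{a})$ and $P_j(q^{b})$ (and their differences with $e$) commute for distinct primes $p,q$. I would settle this up front using Theorem~\ref{th1}: $P_j(p^a)P_j(q^b)=P_j(p^aq^b)=P_j(q^b)P_j(p^a)$ since $\gcd(p^a,q^b)=1$ divides $j-j=0$. Everything else is a routine Möbius-collapse on a prime power together with an invocation of already-proven multiplicativity, so once commutativity is in hand the computation in Step~2 finishes the proof immediately.
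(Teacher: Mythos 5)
Your proof is correct and follows essentially the same route as the paper: both arguments reduce to prime powers via multiplicativity of $C_j$ and $T_j$ and then collapse the M\"obius sum, with your extra remarks on $P_j(1)=e$ and on commutativity of $P_j(p^a)$, $P_j(q^b)$ for distinct primes being harmless added care. In fact your computation $T_j(p^\alpha)=e-P_j(p)$ for all $\alpha\ge 1$ quietly corrects a slip in the paper's own proof, which asserts $T_j(p^k)=0$ for $k\ge 2$ --- a claim that would contradict the corollary's stated product formula $\prod_{p\mid n}(e-P_j(p))$ for non-squarefree $n$.
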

%%%%%%%%%%%%%%%%%%%%%%%%%%%%%%%%%%%%%%%%%%%%%%%%%%%%%%%%%%%%%%%%
\begin{proof} The functions $C_j(n)$ and $T_j$ are multiplicative. So it suffice to compute $C_j(p^k)$ and $T_j(p^k)$ where $p$ is prime number.
Form proposition 4.1, we have
\begin{align*}
C_j(p^k)&=\sum_{l=0}^k\mu(p^l)p^{k-l}P_j(p^{k-l})\\&=p^{k}P_j(p^{k})-p^{k-1}P_j(p^{k-l})\\&
=p^{k}\big(P_j(p^{k})-p^{-1}P_j(p^{k-1})\big),
\end{align*}
and \begin{equation*}
T_j(p^k)=\left\{
  \begin{array}{l l}
    e-P_j(p)\quad\, \text{if}\quad k=1, \\
   0\quad\, \text{otherwise}.
  \end{array} \right.
\end{equation*}
Since $C_j$ is multiplicative we have \begin{align*}
C_j(n)&=\prod_{l=1}^NC_j(p^{\alpha_l})=n\prod_{l=1}^N\big(P_j(p^{\alpha_l})-
\frac{1}{p}P_j(p^{\alpha_l-1}).
\end{align*}
Similarly,
\begin{equation*}
T_j(n)=\prod_{\substack{p\mid n\\p \,\,\text{prime}}}(e-P_j(p))
\end{equation*}
\end{proof}
%%%%%%%%%%%%%%%%%%%%%%%%%%%%%%%%%%%%%%%%%%%%%%%%%%%%%%%%%%%%%%%%
\begin{theorem}
For fixed integers $n$ and $j$,  $\{T_{r,j}(n): r\mid n\}$ is a set of $\tau(n)$ orthgonal idempotent:
\begin{equation}\sum_{r\mid n}T_{r,j}(n)=e\quad \text{and} \quad
T_{r,j}(n)T_{r',j}(n)=\delta_{r,r'}T_{r,j}(n)
\end{equation}
where $\tau(n)$ is the number of divisor of $n.$
\end{theorem}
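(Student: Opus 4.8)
The plan is to exploit two elementary facts: that the condition $\gcd(k,n)=n/r$ partitions $\{1,\dots,n\}$ as $r$ runs over the divisors of $n$, and that the $P_j(n)$ form an arithmetic system of \emph{orthogonal} idempotents, so that $P_i(n)P_j(n)=\delta_{ij}P_i(n)$, $P_{j+n}(n)=P_j(n)$, and $\sum_{j=0}^{n-1}P_j(n)=e$.

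First I would prove the completeness relation $\sum_{r\mid n}T_{r,j}(n)=e$. Every $k\in\{1,\dots,n\}$ satisfies $\gcd(k,n)=d$ for exactly one divisor $d$ of $n$, and writing $d=n/r$ with $r=n/d$ a divisor of $n$, the sets $\{k:1\le k\le n,\ \gcd(k,n)=n/r\}$, indexed by $r\mid n$, form a partition of $\{1,\dots,n\}$. Hence $\sum_{r\mid n}T_{r,j}(n)=\sum_{k=1}^{n}P_{k+j}(n)$, and by periodicity (condition II) this equals $\sum_{k=1}^{n}P_{k}(n)=e$.

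Next, for orthogonality and idempotence I would expand $T_{r,j}(n)T_{r',j}(n)$ as the double sum of the products $P_{k+j}(n)P_{k'+j}(n)$ over $k$ with $\gcd(k,n)=n/r$ and $k'$ with $\gcd(k',n)=n/r'$. By condition I, together with the periodicity needed to reduce the labels $k+j,\ k'+j$ modulo $n$, the term $P_{k+j}(n)P_{k'+j}(n)$ vanishes unless $k\equiv k'\pmod n$, i.e. unless $k=k'$ since $k,k'\in\{1,\dots,n\}$; and $k=k'$ forces $n/r=\gcd(k,n)=n/r'$, hence $r=r'$. Therefore the product is $0$ when $r\ne r'$, while for $r=r'$ only the diagonal survives, giving $T_{r,j}(n)^2=\sum_{\gcd(k,n)=n/r}P_{k+j}(n)=T_{r,j}(n)$.

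Finally, the count: the index set is the set of divisors of $n$, which has $\tau(n)$ elements, and each block of the partition is nonempty (for the divisor $d=n/r$ take $k=d$), so the $T_{r,j}(n)$ are $\tau(n)$ genuinely nonzero mutually orthogonal idempotents. I expect the only point requiring care to be the index bookkeeping modulo $n$ when applying condition I, since the subscripts $k+j$ need not lie in $\{0,\dots,n-1\}$; this is dispatched by condition II, so there is no real obstacle.
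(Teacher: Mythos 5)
Your argument is correct, and it is worth noting that the paper itself states this theorem without any proof, so your write-up actually supplies a missing piece rather than paralleling an existing one. Both halves are sound: the completeness relation follows because the fibers of $k\mapsto\gcd(k,n)$ partition $\{1,\dots,n\}$ as $r$ ranges over the divisors, after which periodicity (condition II) lets you shift the index and invoke $\sum_{l=0}^{n-1}P_l(n)=e$; and the orthogonality follows by expanding the product and killing all off-diagonal terms via condition I. You were right to single out the index bookkeeping as the one delicate point: as literally written, condition I asserts $P_i(n)P_j(n)=\delta_{ij}P_i(n)$ for integer indices $i\neq j$, which is incompatible with the periodicity $P_{j+n}(n)=P_j(n)$ unless $\delta_{ij}$ is read modulo $n$; your reading (vanishing unless $k\equiv k'\pmod n$) is the only consistent one and is what the theorem needs. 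Two small remarks: your final claim that each $T_{r,j}(n)$ is \emph{nonzero} uses that each individual $P_k(n)$ is nonzero, which the axioms of an arithmetic system do not guarantee (an idempotent may be $0$); this is harmless since the theorem only asserts that the family is a set of $\tau(n)$ orthogonal idempotents indexed by the divisors, not that they are nonzero. Also, the completeness half implicitly uses the normalization $\sum_{j=0}^{n-1}P_j(n)=e$ from the definition of an \emph{orthogonal} arithmetic system (Definition 3.1 of the paper), not just conditions I--III; you do cite it, but it deserves explicit mention since the theorem's hypotheses as stated do not repeat it.
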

%%%%%%%%%%%%%%%%%%%%%%%%%%%%%%%%%%%%%%%%%%%%%%%%%%%%%%%%%%%%%%%%
\begin{lemma}
The operators $T_j(n)$ and $C_j(n)$ are related by
$$ C_j(n)=\sum_{r\mid n} c_{n}(n/r)T_{r,j}(n),\quad T_{n,j}(n)=\frac{1}{n}\sum_{r\mid n} c_{n}(n/r)\,C_{j}(r).$$
\end{lemma}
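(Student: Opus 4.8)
The plan is to work in the ``basis'' of the level-$n$ idempotents $P_l(n)$ and to reduce both equalities to elementary facts about Ramanujan sums. Since the $P_l(n)$ are orthogonal (condition~I), a one-line induction gives $S^k(n)=\sum_{l=1}^n\varepsilon_n^{lk}P_l(n)$ for every $k\ge 1$. Plugging this into the definition of $C_j(n)$ and interchanging the two finite sums,
\[
C_j(n)=\sum_{l=1}^n P_l(n)\!\!\sum_{\substack{\gcd(k,n)=1\\ 1\le k\le n}}\!\!\varepsilon_n^{(l-j)k}=\sum_{l=1}^n c_n(l-j)\,P_l(n),
\]
straight from the definition of $c_n(\cdot)$. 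In the same spirit, the substitution $l=k+j$ and the periodicity $P_{l+n}(n)=P_l(n)$ (condition~II) give $T_{r,j}(n)=\sum_{l}P_l(n)$, the sum over a complete residue system modulo $n$ with $\gcd(l-j,n)=n/r$.

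The first identity is now immediate: $c_n(a)$ depends only on $\gcd(a,n)$, so grouping the terms of $\sum_l c_n(l-j)P_l(n)$ according to the value $\gcd(l-j,n)=n/r$ ($r\mid n$) and factoring out $c_n(n/r)$ produces $C_j(n)=\sum_{r\mid n}c_n(n/r)\,T_{r,j}(n)$.

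For the second identity I would lift $C_j(r)$ ($r\mid n$) to level $n$: condition~III gives $P_l(r)=\sum_{\nu=1}^{n/r}P_{l+\nu r}(n)$, and since $a\mapsto c_r(a)$ has period $r$, the same computation yields $C_j(r)=\sum_{l=1}^n c_r(l-j)\,P_l(n)$. Substituting this into the right-hand side of the claimed formula and comparing, for each $l$, the coefficient of $P_l(n)$ with $T_{n,j}(n)=T_j(n)=\sum_l[\gcd(l-j,n)=1]\,P_l(n)$, one sees the statement is equivalent to the arithmetic identity
\[
\frac1n\sum_{r\mid n}c_n(n/r)\,c_r(k)=\begin{cases}1,&\gcd(k,n)=1,\\ 0,&\gcd(k,n)>1,\end{cases}
\]
for every integer $k$.

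Proving this last identity is the only real computation. I would substitute the classical formula $c_m(a)=\sum_{d\mid\gcd(a,m)}\mu(m/d)\,d$ (see \cite{Apo}) in the two forms $c_n(n/r)=\sum_{d\mid n/r}\mu(n/d)\,d$ and $c_r(k)=\sum_{e\mid\gcd(k,r)}\mu(r/e)\,e$, getting a triple sum over $r\mid n$, $d\mid n/r$, $e\mid\gcd(k,r)$; writing $r=e\sigma$ and summing over $\sigma$ first, $\sum_{\sigma\mid n/(ed)}\mu(\sigma)=[\,ed=n\,]$ forces $d=n/e$, and one more use of $\sum_{f\mid m}\mu(f)=[\,m=1\,]$ collapses everything to $n\sum_{e\mid\gcd(k,n)}\mu(e)=n\,[\gcd(k,n)=1]$. (Alternatively, bypass Ramanujan sums: Proposition~4.1 gives $C_j(n)=\mu*\nu_1P_j(n)$ and $T_j(n)=\nu_0*\mu P_j(n)$, and a direct M\"obius inversion of these Dirichlet-convolution identities does the job equally well.) I expect the main obstacle to be bookkeeping rather than ideas: in that triple sum one must keep straight which index divides which and track the ranges carefully through the substitution.
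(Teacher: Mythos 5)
Your proposal is correct and rests on the same engine as the paper's proof: expanding everything in the level-$n$ idempotents (equivalently, powers of $S(n)$) and invoking the orthogonality relation $\sum_{r\mid n}c_n(n/r)c_r(l)=n\,[\gcd(l,n)=1]$, which the paper simply cites from McCarthy and you re-derive from the H\"older--von Sterneck formula $c_m(a)=\sum_{d\mid\gcd(a,m)}\mu(m/d)\,d$ (your triple-sum computation checks out). The one genuine divergence is the first identity: the paper also runs it through the orthogonality relation, starting from $\sum_{r\mid n}c_n(n/r)T_{r,j}(n)$ and collapsing the coefficient of each $S^l(n)$, whereas you write $C_j(n)=\sum_{l=1}^n c_n(l-j)P_l(n)$ and merely group the terms by the value of $\gcd(l-j,n)$, so that only the evenness of $c_n$ modulo $n$ is needed. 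That is a mild but real simplification, and your direction of expansion ($S^k=\sum_l\varepsilon_n^{lk}P_l(n)$, which needs only conditions I--II) also sidesteps the paper's implicit use of the completeness relation $\sum_j P_j(n)=e$ when it inverts to write $P_{k+j}(n)$ in terms of powers of $S(n)$. For the second identity your lift of $C_j(r)$ to level $n$ via condition III and the comparison of coefficients of $P_l(n)$ is exactly the paper's computation in different notation.
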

%%%%%%%%%%%%%%%%%%%%%%%%%%%%%%%%%%%%%%%%%%%%%%%%%%%%%%%%%%%%%%%%
\begin{proof}
\begin{align}
\sum_{r\mid n} c_{n}(n/r)T_{r,j}(n)&
=\sum_{r\mid n} c_{n}(n/r)\sum_{\gcd(k,n)=\tfrac{n}{r}}P_{k+j}(n)\\&=
\sum_{l=1}^n\sum_{r\mid n} c_{n}(n/r)\sum_{\gcd(k,n)=\tfrac{n}{r}}\varepsilon_n^{-(k+j)l}S_n^l
\\&=\frac{1}{n}
\sum_{l=1}^n\varepsilon_n^{-lj}\sum_{r\mid n} c_{n}(n/r)c_r(l)S_n^l
\end{align}
From formula Exercise 2.23 in \cite{McC1986}, we have
\begin{equation}
\sum_{r\mid n} c_{n}(n/r)c_r(l)=\left\{
  \begin{array}{l l}
    n\quad\, \text{if}\quad \gcd(l,n)=1, \\
   0\quad \text{if}\quad \gcd(l,n)>1.
  \end{array} \right.  \label{SSys1}
\end{equation}
Hence
\begin{align}
\sum_{r\mid n} c_{n}(n/r)T_{r,j}(r)
=\sum_{\gcd(l,n)=1} \varepsilon_n^{-lj}S_{n}^l= C_j(n).
\end{align}
Similarly,
\begin{align}
\sum_{r\mid n} c_{n}(n/r)C_{j}(r)&
=\sum_{r\mid n} c_{n}(n/r)\sum_{\gcd(k,n)=\frac{n}{r}}\varepsilon_n^{-jk}S_n^{k}\\&=
\sum_{l=1}^n\sum_{r\mid n} c_{n}(n/r)c_r(l-j)P_{l}(n)
\\&=
\sum_{l=1}^n\sum_{r\mid n} c_{n}(n/r)c_r(l)P_{l+j}(n)
\end{align}
The result follows from formula (see, Exercise 2.23 \cite{McC1986})
\begin{equation}
\sum_{r\mid n} c_{n}(\tfrac{n}{r})c_r(k)=\left\{
  \begin{array}{l l}
    n\quad\, \text{if}\quad \gcd(k,n)=1, \\
   0\quad \text{if}\quad \gcd(k,n)>1.
  \end{array} \right.  \label{SSys1}
\end{equation}
Hence
\begin{align}
\sum_{r\mid n} c_{n}(n/r)C_{j}(r)=n\sum_{\gcd(k,n)=1}  P_{k+j}(n)=nT_{n,j}(n).
\end{align}
\end{proof}
%%%%%%%%%%%%%%%%%%%%%%%%%%%%%%%%%%%%%%%%%%%%%%%%%%%%%%%%%%%%%%%%
\begin{theorem}Let $\alpha :\mathbb{N}\rightarrow \mathbb{C}$ be even function $(mod \,\,n)$, then for all $j=0,\,1,\,\dots, $, we have
\begin{equation}
\sum_{r\mid n}\alpha(n/r) C_j(r)=\sum_{r\mid n}\mathcal{R}(\alpha)(r)T_{r,j}(n).
\end{equation}
\end{theorem}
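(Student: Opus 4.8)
The plan is to evaluate both members of the identity in terms of the idempotents $\{P_m(n)\}_{m=1}^{n}$ and thereby reduce the statement to a single identity between ordinary arithmetic functions.

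First I would establish that for every divisor $r$ of $n$
\begin{equation}\label{eq:plan-Cjr}
C_j(r)=\sum_{m=1}^{n}c_r(m-j)\,P_m(n),\qquad
T_{r,j}(n)=\sum_{\substack{1\le m\le n\\ \gcd(m-j,\,n)=n/r}}P_m(n).
\end{equation}
For the first formula, writing $C_j(r)=\sum_{1\le k\le r,\ \gcd(k,r)=1}\varepsilon_r^{-jk}S(r)^{k}$ with $S(r)=\sum_{m=1}^{r}\varepsilon_r^{m}P_m(r)$ and using orthogonality of the idempotents at level $r$ to get $S(r)^{k}=\sum_{m=1}^{r}\varepsilon_r^{mk}P_m(r)$, one collects the $P_m(r)$ and recognises $\sum_{\gcd(k,r)=1}\varepsilon_r^{(m-j)k}=c_r(m-j)$; then condition III, applied to the progression $\{m+r,\dots,m+(n/r)r\}$, replaces $P_m(r)$ by $\sum_{t=1}^{n/r}P_{m+tr}(n)$, and since the indices $m+tr$ run through a complete residue system modulo $n$ while $c_r$ is constant on residue classes modulo $r$, formula \eqref{eq:plan-Cjr} for $C_j(r)$ follows. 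The formula for $T_{r,j}(n)$ is just its definition after the substitution $m=k+j$ together with the periodicity (condition II).

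Substituting \eqref{eq:plan-Cjr} into the two sides of the theorem, the left-hand side becomes $\sum_{m=1}^{n}\bigl(\sum_{r\mid n}\alpha(n/r)\,c_r(m-j)\bigr)P_m(n)$ and the right-hand side becomes $\sum_{m=1}^{n}\mathcal R(\alpha)\bigl(n/\gcd(m-j,n)\bigr)P_m(n)$ (for each $m$ there is exactly one divisor $r$ with $\gcd(m-j,n)=n/r$). Hence it suffices to prove the arithmetic identity
\begin{equation}\label{eq:plan-arith}
\sum_{r\mid n}\alpha(n/r)\,c_r(t)=\mathcal R(\alpha)\!\left(\frac{n}{\gcd(t,n)}\right)\qquad (t\in\Z)
\end{equation}
for every function $\alpha$ even modulo $n$. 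To prove \eqref{eq:plan-arith} I would insert H\"{o}lder's formula $c_r(t)=\sum_{e\mid\gcd(t,r)}e\,\mu(r/e)$ (see \cite{Apo}) on the left; writing $r=ef$ and noting that the conditions $e\mid\gcd(t,r)$, $r\mid n$ are equivalent to $e\mid\gcd(t,n)$, $f\mid n/e$, the left-hand side collapses to $\sum_{e\mid\gcd(t,n)}e\,(\mu*\alpha)(n/e)$. On the right, the definition of the Fourier coefficient gives $\mathcal R(\alpha)(n/\gcd(t,n))=\sum_{\delta\mid n}\alpha(n/\delta)\,c_\delta(\gcd(t,n))$; expanding $c_\delta(\gcd(t,n))=\sum_{e\mid\gcd(t,n,\delta)}e\,\mu(\delta/e)$ and writing $\delta=eg$ produces the very same sum $\sum_{e\mid\gcd(t,n)}e\,(\mu*\alpha)(n/e)$. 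This proves \eqref{eq:plan-arith}, and with it the theorem.

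The step I expect to be the main obstacle is the passage from level $r$ to level $n$ in \eqref{eq:plan-Cjr}: one has to use condition III together with the exact period $r$ of $c_r$, so that every residue class modulo $r$ contained in $\{1,\dots,n\}$ carries the constant value $c_r(m-j)$; and one must keep the normalisation in the definition of $\mathcal R(\alpha)$ aligned so that no spurious factor of $n$ survives --- the computation above confirms that with the coefficients as defined in the text the two sides match exactly.
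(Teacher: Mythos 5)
Your proof is correct, and its core coincides with the paper's: both arguments express $C_j(r)$ as $\sum_{m}c_r(m-j)P_m(r)$ and then use condition III together with the $r$-periodicity of $c_r$ to lift this to the level-$n$ idempotents, arriving at $\sum_{m=1}^{n}c_r(m-j)P_m(n)$. Where you diverge is the endgame. The paper regroups this level-$n$ sum according to the value of $\gcd(m-j,n)$, writing $\sum_{k=1}^{n}c_r(k)P_{k+j}(n)=\sum_{\delta\mid n}c_r(n/\delta)T_{\delta,j}(n)$, and then swaps the sums over $r$ and $\delta$ so that the inner sum is literally the definition of $\mathcal{R}(\alpha)(\delta)$. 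You instead compare coefficients of the individual $P_m(n)$ and reduce the theorem to the scalar identity $\sum_{r\mid n}\alpha(n/r)c_r(t)=\mathcal{R}(\alpha)\bigl(n/\gcd(t,n)\bigr)$, which you then verify by a double application of H\"older's (von Sterneck's) formula. Both routes are valid; your reduction is arguably cleaner in that it isolates the operator content in one step and leaves a purely classical arithmetic statement. However, the H\"older computation is more work than needed: since $r\mid n$ implies $\gcd(\gcd(t,n),r)=\gcd(t,r)$, the evenness of $c_r$ gives $c_r(t)=c_r(\gcd(t,n))$ directly, and your scalar identity then coincides term by term with the defining formula for $\mathcal{R}(\alpha)(n/\gcd(t,n))$ --- this is in effect the shortcut the paper takes when it regroups into the $T_{\delta,j}(n)$. (Note also that, in your argument as in the paper's, the hypothesis that $\alpha$ be even mod $n$ is never actually used; the identity holds for arbitrary $\alpha$ once $\mathcal{R}(\alpha)$ is defined by the stated formula.)
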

\begin{proof}
\begin{align*}
\sum_{r\mid n}\alpha(r) C_j(r)&=\sum_{r\mid n}\sum_{\delta\mid r}c_r(r/\delta) T_{\delta,j}(r)\, \alpha(n/r)\\&
=\sum_{r\mid n}\sum_{\delta\mid r}c_r(r/\delta) \sum_{\substack{\gcd(k,r)=\tfrac{r}{\delta}\\1\leq k\leq r}}P_{k+j}(r)\, \alpha(n/r)\\&
=\sum_{r\mid n}\alpha(n/r)\sum_{k=1}^rc_r(k) P_{k+j}(r)
\end{align*}
The Ramanujan sum $c_r$ is periodic function with period equal to $r$, then from condition $III)$ of the Definition 2.2, we can write
\begin{align*}
\sum_{k=1}^rc_r(k) P_{k+j}(r)&=\sum_{k=1}^r\sum_{l=0}^{n/r-1}c_r(k+lr) P_{k+lr+j}(r)
\\&=\sum_{k=1}^nc_r(k) P_{k+j}(n)\\&
=\sum_{\delta \mid n}c_r(n/\delta) \sum_{\substack{\gcd(k,n)=\tfrac{n}{\delta}\\1\leq k\leq k}}P_{k+j}(n)\\&=\sum_{\delta \mid n}c_r(n/\delta)T_{\delta,j}(n).
\end{align*}
Hence
\begin{align*}\sum_{r\mid n}\alpha(r) C_j(r)&
=\sum_{\delta\mid n}\sum_{r\mid n}c_r(n/\delta)\alpha(n/r)T_{\delta,j}(n)
\\&
=\sum_{\delta\mid n}\mathcal{R}(\alpha)(\delta)T_{\delta,j}(n).
\end{align*}
\end{proof}
%%%%%%%%%%%%%%%%%%%%%%%%%%%%%%%%%%%%%%%%%%%%%%%%%%%%%%%%%%%%%%%%
\section{Example}
%%%%%%%%%%%%%%%%%%%%%%%%%%%%%%%%%%%%%%%%%%%%%%%%%%%%%%%%%%%%%%%%
We denote by $H_R$  the vector space of all analytic functions on the open ball
$B(0,R)$ in complex plane $\mathbb{C}$. $H_R$ endowed with the topology of compact convergence, it is a complete locally convex topological vector spaces.

\noindent For $n=2,\,3,\,\dots,$ we denote by $S(n)$ the diagonal operator acting on monomials $e_k(z)=z^k, \,(k=0,\,1,\dots\,)$ as follows
$$S(n)e_k=\varepsilon^k_n e_k,\quad\varepsilon_n=e^{\tfrac{2i\pi}{n}}.$$
Its clearly that $S(n)$ is a continuous map from  $H_R$ into its self satisfying $S^n(n)=i_{H_R}$. The primitive idempotents $P_{1}(n),\,\dots P_{n}(n)$  related to  $S(n)$ are given by
\begin{equation}
P_{j}(n)=\frac{1}{n}\sum_{l=0}^{n-1}\varepsilon_n^{-lj}S^l(n).\label{D1}
\end{equation}
These obey at the following relations
\begin{equation}
i_{H_R}=\sum_{j=1}^{n}P_{j}(n),\,\,\,\,\,P_{i}(n)P_{j}(n)=\delta_{ij}P_{j}(n).
\end{equation}
The set $\{P_j(n)\}_{j,n=1}^\infty$ is an arithmetic system of orthogonal idempotent on the algebra $L(H_R)$ of all continuous linear maps of the space $H_R.$ To see this it suffice to prove for all positive integers $n$ and $m$ such that $n$ divide $ m$ the following identity
$$P_{j}(n)=\sum_{k=1}^{m/n}P_{j+kn}(m).$$
Indeed, from \eqref{D1} and the following formula
\begin{equation}
\sum_{k=1}^{r}
\varepsilon_{r}^{-lk}=\left\{
  \begin{array}{l l}
    r, \,\, \text{if} \,\, r | l \\
   0,\,\,\,\text{otherwise} ,
  \end{array} \right.\label{S1}
\end{equation}
we can write
\begin{align*}
\sum_{k=1}^{m/n}P_{j+kr}(m)&=\frac{1}{m}\sum_{k=1}^{m/n}\sum_{l=1}^m
\varepsilon_{m}^{-l(j+kr)}S^l(m)\\&=\frac{1}{m}\sum_{l=1}^d\varepsilon_m^{-lj}S^l(m)
\sum_{k=1}^{m/n}
\varepsilon_{m}^{-lkr}
\\&=\frac{1}{n}\sum_{l=1}^r\varepsilon_{n}^{-lj}S^l(n)\\&=P_{j}(n).
\end{align*}
The operators $C_0(n)$ and $T_0(n)$ acting on the basis $\{e_k\}$ as follows
\begin{align}
&C_0(n)e_m=c_n(m)e_m,\\&
T_0(n)e_m=\left\{
  \begin{array}{l l}
    e_m\quad\, \text{if} \,\,\gcd(n,m)=1, \\
   0\quad\,\text{otherwise }.
  \end{array} \right.
\end{align}
Indeed, from Proposition 3.1, we have
\begin{align*}
T_0(n)e_m&=\sum_{d\mid n}\mu(d)P_0(d)e_m\\&=\sum_{d\mid \gcd(n,m)}\mu(d)\,e_m\\&=
\left\{
  \begin{array}{l l}
    e_m\quad\, \text{if} \,\,\gcd(n,m)=1, \\
   0\quad\,\text{otherwise }.
  \end{array} \right.
\end{align*}
Now, if we compare the trace and the determinant of the matrix $[T_0(n)]_N$(resp. $[C_{0}(n)]_N$) of the restriction of $C_{0}(n)$ (resp. $T_{0}(n)$) to the subspace generated by $\{e_1,\dots e_N\}$, we get
\begin{equation}
\prod_{k=1}^Nc_n(k)=\left\{
  \begin{array}{l l} \prod_{p|n\,\text{prime}}(1-p)^{[\tfrac{N}{p}]}
    \quad \text{if}\,\,\,n\,\text{is squarefree}, \\
   0\quad\, \text{if}\quad \text{otherwise}.
  \end{array} \right.  \label{S1}
\end{equation}
\begin{equation}
\sum_{\substack{\gcd(k,n)=1\\1\leq k\leq n} } [\tfrac{N-k}{n}]=N \omega(n)-\sum_{p|n\,\,\text{prime}}[\tfrac{N}{p}]=\sum_{r\mid n} \mu(r)[\tfrac{N}{r}],
\end{equation}
and
\begin{equation}
\sum_{k=1}^Nc_n(k)=\sum_{l=1}^k\big(p_l^{\alpha_l}[\tfrac{N}{p_l^{\alpha_l}}]
-p_l^{\alpha_l-1}[\tfrac{N}{p_l^{\alpha_l-1}}]\big)=\sum_{d\mid n}d\mu(n/d)[\tfrac{N}{d}].
\end{equation}
Let $H_0$ the subspace of all function $f\in H_R$ such that $f(0)=0$ and let $\alpha: \mathbb{N}\rightarrow \mathbb{C}$. For $f\in H_0$, we put
\begin{equation}
\mathcal{P}(\alpha)f=\sum_{n=1}
^\infty \alpha(n)P_0(n)f. \label{op}
\end{equation}
%%%%%%%%%%%%%%%%%%%%%%%%%%%%%%%%%%%%%%%%%%%%%%%%%%%%%%%%%%%%%%%%
\begin{proposition}
 Let $\alpha$ be an arithmetic function. Then  $\mathcal{P}(\alpha)$ defines a continuous diagonal map form $H_0$ into itself, if and only if $\limsup_{m\rightarrow \infty}|(\nu_0*\alpha)(m)|^{1/m}\leq 1$.\\Furthermore,
 \begin{align}
\mathcal{P}(\epsilon)=i_{H_0}, \quad \mathcal{P}(\alpha\Box \beta)= \mathcal{P}(\alpha)\mathcal{P}(\beta).\label{box1}
\end{align}
where
\begin{equation}
\epsilon(n)=\left\{
  \begin{array}{l l}
    1\quad \text{if}\quad n=1 \\
   0\quad \text{otherwise}.
  \end{array}
  \right.
  \end{equation}
\end{proposition}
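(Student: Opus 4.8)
The plan is to analyze the operator $\mathcal{P}(\alpha)$ by computing its action on the monomial basis $\{e_m\}_{m\geq 1}$ of $H_0$, deducing that it is diagonal, and then reading off the continuity condition and the composition law from the eigenvalues. First I would recall from \eqref{D1} that $P_0(n)e_m = \frac{1}{n}\sum_{l=0}^{n-1}\varepsilon_n^{-lm}\,e_m$, which by \eqref{S1} equals $e_m$ when $n\mid m$ and $0$ otherwise. Hence $\mathcal{P}(\alpha)e_m = \bigl(\sum_{n\mid m}\alpha(n)\bigr)e_m = (\nu_0 * \alpha)(m)\,e_m$, so $\mathcal{P}(\alpha)$ is the diagonal operator with eigenvalue $(\nu_0*\alpha)(m)$ on $e_m$. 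This computation immediately makes all three assertions plausible and reduces everything to scalar bookkeeping.

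For the continuity characterization, I would use the standard fact about $H_R$ (analytic functions on $B(0,R)$ with compact-convergence topology): a diagonal operator $e_m\mapsto \lambda_m e_m$ maps $H_R$ (equivalently $H_0$) continuously into itself if and only if the $\lambda_m$ do not inflate the radius of convergence, i.e. $\limsup_m |\lambda_m|^{1/m}\leq 1$. Applying this with $\lambda_m=(\nu_0*\alpha)(m)$ gives the stated condition. The one subtlety to handle carefully is the interchange of summation needed to justify that the series $\sum_n \alpha(n)P_0(n)f$ converges in $H_0$ when $f=\sum_m a_m e_m$: for each fixed $m$ only finitely many terms (those with $n\mid m$) contribute, so the formal rearrangement is legitimate coefficient-by-coefficient, and the growth bound then upgrades this to convergence in the compact-open topology. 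I would also note that well-definedness of the infinite sum as an operator on \emph{all} of $H_0$ is exactly equivalent to the $\limsup$ condition, which is why the statement is an "if and only if".

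For the identities in \eqref{box1}: since $(\nu_0*\epsilon)(m)=\sum_{n\mid m}\epsilon(n)=1$ for every $m$, we get $\mathcal{P}(\epsilon)e_m=e_m$, so $\mathcal{P}(\epsilon)=i_{H_0}$. For the multiplicativity, I would compose the two diagonal operators: $\mathcal{P}(\alpha)\mathcal{P}(\beta)e_m = (\nu_0*\alpha)(m)\,(\nu_0*\beta)(m)\,e_m$, and then invoke the identity $(\nu_0*\alpha)(\nu_0*\beta)=\nu_0*(\alpha\,\Box\,\beta)$ already recorded in the proof of Proposition~2.7 (citing \cite{leh}). This shows the eigenvalues of $\mathcal{P}(\alpha)\mathcal{P}(\beta)$ and of $\mathcal{P}(\alpha\,\Box\,\beta)$ coincide on every $e_m$, hence the operators are equal. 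I would also remark that the $\limsup$ condition is preserved under $\Box$ when it holds for $\alpha$ and $\beta$, so that $\mathcal{P}(\alpha\,\Box\,\beta)$ is again a legitimate continuous map, keeping the composition law internally consistent.

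The main obstacle I anticipate is not any single deep point but rather pinning down the precise topological statement about diagonal multipliers on $H_R$: one must verify both directions of "$\limsup|\lambda_m|^{1/m}\leq 1$ $\iff$ continuity", using that a seminorm defining the topology is $p_\rho(f)=\sup_{|z|\leq\rho}|f(z)|$ for $\rho<R$ and that $\sup_m |a_m|\rho^m<\infty$ characterizes the coefficient sequences, together with the fact that $R$ is arbitrary (so one really needs the condition with constant $1$, independent of $R$). Everything else is a routine translation of arithmetic-function identities into eigenvalue identities.
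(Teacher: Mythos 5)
Your proof is correct and follows essentially the same route as the paper: both rest on the diagonal computation $\mathcal{P}(\alpha)e_m=(\nu_0*\alpha)(m)\,e_m$ together with the radius-of-convergence characterization of continuous diagonal maps on $H_R$ (the paper handles the converse direction via the Closed Graph Theorem, which matches your seminorm verification). The only real difference is in the composition law: the paper rearranges the operator-valued series using $\alpha P_0\,\Box\,\beta P_0=(\alpha\Box\beta)P_0$ from Proposition 2.7, whereas you compare eigenvalues on the monomial basis via the equivalent scalar identity $(\nu_0*\alpha)(\nu_0*\beta)=\nu_0*(\alpha\Box\beta)$ --- the same von Sterneck--Lehmer relation, applied one step later.
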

%%%%%%%%%%%%%%%%%%%%%%%%%%%%%%%%%%%%%%%%%%%%%%%%%%%%%%%%%%%%%%%%
\begin{proof}
The operator $\mathcal{P}(\alpha)$ acts on monomials $z^m$ as follows
\begin{equation}
\mathcal{P}(\alpha)z^m=\left\{
  \begin{array}{l l} \sum_{n=1}^\infty \alpha(n)
    \quad \text{if}\,\,\,m=0, \\
   (\nu_0*\alpha)(m)z^m\quad \text{otherwise}.
  \end{array} \right.  \label{S1}
\end{equation}
Suppose $\mathcal{P}(\alpha)$ defines a continuous linear map on  $H_0$. It is well-known that $$f(z)=\sum_{m=1}^\infty a_mz^m\in H_R \quad\text{iff} \quad \limsup_{m\rightarrow \infty}|a_m|^{1/m}\leq R.$$Hence,
\begin{align*}
\sum_{m=1}^\infty (\nu_0*\alpha)(m)a_mz^m \in H_R &\quad \Leftrightarrow\quad  \limsup_{m\rightarrow \infty}|a_m(\nu_0*\alpha)(m)|^{1/m}\leq R \\&\quad \Leftrightarrow\quad \limsup_{m\rightarrow \infty}|(\nu_0*\alpha)(m)|^{1/m}\leq 1.
\end{align*}
This show that $$ \limsup_{m\rightarrow \infty}|\nu_0*\alpha)(m)|^{1/m}\leq 1.$$ The converse follows from the Closed Graph Theorem (see, \cite{Rud} Theorem
2.15, p. 51]).
To prove \eqref{box1}, we use equation \eqref{b1}
\begin{align*}
\mathcal{P}(\alpha \Box\beta)&=\sum_{n=1}
^\infty (\alpha \Box \beta)(n)P_0(n)\\&=
\sum_{n=1}
^\infty (\alpha P_0\Box  \beta P_0)(n)\\&=
\sum_{n=1}
^\infty \alpha(n) P_0(n)\sum_{n=1}^\infty \beta (n)P_0(n)
\\&=\mathcal{P}(\alpha)\mathcal{P}(\beta)
\end{align*}
\end{proof}
%%%%%%%%%%%%%%%%%%%%%%%%%%%%%%%%%%%%%%%%%%%%%%%%%%%%%%%%%%%%%%%%
Many examples of operators acting on the space $H_0$ such as the Euler operator,  the integration operator, the shift operator and the backward shift operator given by
$$(\theta f)(z)=zf'(z), \quad (If)(z)=\int_0^zf(t)\,dt, \quad (Uf)=zf(z), \quad (U^*f)(z)=\frac{f(z)-f(0)}{z},$$
can be represented by means of the map $\mathcal{P}(\alpha)$ for suitable arithmetic function $\alpha$. For $\alpha=\varphi$ (where $\varphi$ is the Euler's totient function), we have
\begin{align}
\mathcal{P}(\phi)e_m=\nu_0*(\mu*\nu_1)(m)e_m=\nu_1(m)e_m=me_m.\label{coi}
\end{align}
On the other hand, we have
$$ \limsup_{m\rightarrow \infty}|\nu_0*\phi)(m)|^{1/m}= \limsup_{m\rightarrow \infty}m^{1/m}= 1.$$
Then, $\mathcal{P}(\phi)$ is a continuous map on $H_0$.  From \eqref{coi}, the mapping $\mathcal{P}(\phi)$ coincides with the Euler operator $\theta =z\frac{d}{dz}$ on the monomials, hence
\begin{equation}
\mathcal{P}(\phi)=\theta.
\end{equation}
Moreover, from Proposition 4.1 and the following identity  \cite{McC1986}
$$J_r=\underbrace{\phi\,\Box\,\dots \,\Box\, \phi}_{r\text{-times}},$$
we get
\begin{align*}
\mathcal{P}(J_r)=\mathcal{P}(\phi)\dots \mathcal{P}(\phi)=\theta^r.
\end{align*}
($J_r$ called Jordan function)\\
A similarly arguments show that
$$\mathcal{P}(\mu)=e_1\otimes e_1, \quad
\mathcal{P}(\mu\ast v_1)=IU^*.$$
%%%%%%%%%%%%%%%%%%%%%%%%%%%%%%%%%%%%%%%%%%%%%%%%%%%%%%%%%%%%%%%%

\end{document}